\documentclass[a4paper]{amsart}
\usepackage{amsmath,amsthm,amssymb}
\usepackage[pdftex]{graphicx}
\usepackage{mathrsfs}
\usepackage[dvipdfmx, usenames]{color}
\usepackage{ascmac}
\usepackage[all]{xy}

\newtheorem{thm}{Theorem}[section]
\newtheorem{lem}[thm]{Lemma}
\newtheorem{cor}[thm]{Corollary}
\newtheorem{prop}[thm]{Proposition}

\theoremstyle{definition}

\newtheorem{example}[thm]{Example}

\newtheorem{defn}[thm]{Definition}

\theoremstyle{remark}

\newtheorem{rem}[thm]{Remark}        

\numberwithin{equation}{section}

\def\XXint#1#2#3{{\setbox0=\hbox{$#1{#2#3}{\int}$}
\vcenter{\hbox{$#2#3$}}\kern-.5\wd0}}

\newcommand{\R}{\mathbb{R}}

\newcommand{\Lip}{\mathsf{Lip}}
\newcommand{\Cpl}{\mathsf{Cpl}}
\newcommand{\di}{\mathsf{Diam}\,}
\newcommand{\supp}{\mathsf{supp}\,}

\newcommand{\lv}{\left\vert}
\newcommand{\rv}{\right\vert}

\newcommand{\wtilde}[1]{\widetilde{#1}}

\newcommand{\CAT}{\mathsf{CAT}}
\newcommand{\Obs}{\mathsf{ObsDiam}}
\newcommand{\Dis}{\mathsf{dis}\,}

\newcommand{\calB}{\mathcal{B}}
\newcommand{\calC}{\mathcal{C}}

\newcommand{\calH}{\mathcal{H}}

\newcommand{\calK}{\mathcal{K}}
\newcommand{\calL}{\mathcal{L}}

\newcommand{\calN}{\mathcal{N}}

\newcommand{\calP}{\mathcal{P}}

\newcommand{\calX}{\mathcal{X}}
\newcommand{\calY}{\mathcal{Y}}

\newcommand{\bbN}{\mathbb{N}}

\newcommand{\frakY}{\mathfrak{Y}}

\usepackage[abbrev,nobysame]{amsrefs}

\makeatletter
\def\@makefnmark{%
\leavevmode
\raise.9ex\hbox{\check@mathfonts
\fontsize\sf@size\z@\normalfont%
\@thefnmark}%
}
\makeatother

\title{Observable diameters with varying screens}
\author{Yu Kitabeppu}
\author{Naoto Nishida}
\address[Yu Kitabeppu]{Kumamoto University}
\email[Yu Kitabeppu]{ybeppu@kumamoto-u.ac.jp}
\address[Naoto Nishida]{Research and Consulting of Regional Science Co, Ltd.}
\email[Naoto Nishida]{n-nishida@chklab.com}

\begin{document}
\maketitle
 \begin{abstract}
  In this paper, we obtain the limit formula of the observable diameter with non-Euclidean screen. In order to treat a sequence of observable diameters with varying screens, we define new types of observable diameters with errors.  
 \end{abstract}
%
%
\section{Introduction}
The concentration of measure phenomenon is one of the important subject in the geometry of metric measure spaces. Observable diameter, which is introduced by Gromov \cite{Grom}, measures how smaller set possessing large mass. More precisely, for a given mm space $X$, a metric space $Y$ (we call it screen), and a small real number $\kappa>0$, $\Obs_Y(X;-\kappa)$ is defined to be the supremum of the \emph{partial diameter} $\di(f_{\#}\mu_X;1-\kappa)$ where $f:X\rightarrow Y$ runs over all 1-Lipschitz maps, see Definition \ref{def:obs}. It is a fundamental quantity to study the concentration of measures. In the several literatures, the case $Y=\R$ is well studied(see for instance \cite{Shioya} and the reference therein). On the other hand, the observable diameter with generic screen $Y$ is studied for the relation to L\'evy family(\cite{Grom,Fdouble,Flp}). However, as long as the authors known, no one consider the behavior of observable diameters with varying screens. In this paper, we consider the limit formula of a sequence of observable diameters with varying screens. 
\par For the Gromov-Hausdorff convergent sequence of metric spaces $Y_n\xrightarrow{GH} Y$, we are able to construct a family of approximation maps $f_n:Y_n\rightarrow Y$. Such maps can be made to be Borel measurable, but not even continuous in general. Hence we cannot compare the Lipschitz maps from $X$ to $Y_n$ with those to $Y$ via approximation maps. This means, we cannot compare $\Obs_{Y_n}(X;-\kappa)$ with $\Obs_Y(X;-\kappa)$ directly. To overcome the problem, we define new types of observable diameters with errors( see Definition \ref{def:obserrors}). Unfortunately, the observable diameters with errors do not always converges to the observable diameter as errors tending to 0. We therefore give a sufficient condition, named \emph{Lipschitz-approximated family}, for getting the convergence of observable diameters with errors to the original one. Then we obtain the main result as follows. 
\begin{thm}[Theorem \ref{thm:main}]
 Let $X$ be an mm space and $\frakY$ a family of metric spaces being a Lipschitz-approximated family to $X$ (see Definition \ref{def:lipapprox}). 
  \begin{enumerate}
   \item Assume compact metric spaces $Y_n$, $Y\in\frakY$ satisfies $Y_n\xrightarrow{GH}Y$. Then it holds 
   \begin{align}
    \Obs_Y(X;-\kappa)=\lim_{n\rightarrow\infty}\Obs_{Y_n}(X;-\kappa)\notag
   \end{align} 
   for $\calL^1$-a.e. $\kappa\in(0,1)$. 
   \item Assume noncompact metric spaces $Y_n,Y\in\frakY$ satisfying $(Y_n,o_n)\xrightarrow{pGH}(Y,o)$ for some points $o_n\in Y_n$, $o\in Y$. Then it holds 
   \begin{align}
    \Obs_Y(X;-\kappa)\leq \liminf_{n\rightarrow\infty}\Obs_{Y_n}(X;-\kappa)\notag
   \end{align}  
   for $\calL^1$-a.e. $\kappa\in(0,1)$. 
  \end{enumerate} 
\end{thm}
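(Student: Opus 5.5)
We compare $\Obs_{Y_n}(X;-\kappa)$ and $\Obs_Y(X;-\kappa)$ through the observable diameters with errors of Definition \ref{def:obserrors}. Roughly, $\Obs_Y(X;-\kappa,\varepsilon)$ is a supremum over maps $f:X\to Y$ that are $1$-Lipschitz up to an additive error $\varepsilon$ (i.e. $d_Y(f(x),f(x'))\le d_X(x,x')+\varepsilon$), possibly off a set of measure at most $\varepsilon$. The plan has three parts: (a) monotonicity and comparison in the error parameter; (b) transfer of maps between $Y_n$ and $Y$ via GH-approximations, at the cost of an error $\varepsilon_n\to0$; (c) use of the Lipschitz-approximated hypothesis (Definition \ref{def:lipapprox}) to remove the error in the limit. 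This removal works at continuity points of the monotone function $\kappa\mapsto\Obs_Y(X;-\kappa)$, which gives the $\calL^1$-a.e. statement.

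For (b), let $\varepsilon_n$-approximations $\phi_n:Y_n\to Y$ and $\psi_n:Y\to Y_n$ be Borel. Given a $1$-Lipschitz $f:X\to Y_n$, the map $\phi_n\circ f$ is $1$-Lipschitz up to additive error $2\varepsilon_n$. The push-forward $(\phi_n\circ f)_\#\mu_X$ satisfies
\[
\di((\phi_n\circ f)_\#\mu_X;1-\kappa)\ge \di(f_\#\mu_X;1-\kappa)-2\varepsilon_n .
\]
The reason is that a Borel set $A\subset Y$ of mass at least $1-\kappa$ pulls back to $\phi_n^{-1}(A)$, whose diameter is at most $\di A+2\varepsilon_n$. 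Similarly, $\psi_n\circ g$ transfers maps $g:X\to Y$ into $Y_n$. This yields
\[
\Obs_{Y_n}(X;-\kappa)\le \Obs_Y(X;-\kappa,2\varepsilon_n)+2\varepsilon_n,\qquad \Obs_Y(X;-\kappa)\le \Obs_{Y_n}(X;-\kappa,2\varepsilon_n)+2\varepsilon_n .
\]
In the pointed noncompact case, the approximations exist only on balls $B(o,R)$. I would therefore only transfer maps from $X$ into $Y$, truncating first. For $g:X\to Y$ $1$-Lipschitz, composing with a nearest-point-type retraction into $\overline{B}(o,R)$ is not Lipschitz in general. Instead I would use that $\mu_X$ is tight, so $g_\#\mu_X(B(o,R))\ge1-\delta$ for large $R$. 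Restricting to that set costs only an additional $\delta$ in the mass parameter $\kappa$. This explains why only the inequality $\Obs_Y\le\liminf\Obs_{Y_n}$ survives in part (2).

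For (c), the hypothesis that $\frakY$ is Lipschitz-approximated to $X$ should say the following: for every $Z\in\frakY$, any $\varepsilon$-almost $1$-Lipschitz map $X\to Z$ is close, uniformly or in measure with deviation $\eta(\varepsilon)\to0$ (uniform over the family), to a genuine $1$-Lipschitz map. Using this, I would prove
\[
\Obs_Z(X;-\kappa-\eta,0)-\eta\le\Obs_Z(X;-\kappa,\varepsilon)\le\Obs_Z(X;-(\kappa-\eta))+\eta ,
\]
which uses the standard behaviour of partial diameters under small Ky Fan distance perturbation. Combining this with (b), I get
\[
\Obs_Y(X;-(\kappa+\eta_n))-\eta_n'\le\Obs_{Y_n}(X;-\kappa)\le \Obs_Y(X;-(\kappa-\eta_n))+\eta_n'
\]
with $\eta_n,\eta_n'\to0$. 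Since $\kappa\mapsto\Obs_Y(X;-\kappa)$ is monotone nonincreasing, it is continuous off a countable set. At every continuity point both bounds converge to $\Obs_Y(X;-\kappa)$, giving (1). Only the lower bound is available in the pointed case, giving (2).

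The main obstacle is step (c), turning approximately Lipschitz maps into Lipschitz ones with a uniform control over the varying screens $Y_n$. I would rely directly on the definition of the Lipschitz-approximated family, checking that its quantifiers are uniform in $n$. If they are not uniform, I would apply it only on the limit screen $Y$ and arrange (b) so that the error-carrying maps always land in $Y$.
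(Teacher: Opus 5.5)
Your proposal is correct and follows essentially the paper's proof: maps are transferred through approximation maps as in Theorem~\ref{thm:pGHlsc} and Corollary~\ref{cor:GHlscusc}, the error is removed by the Ky Fan/Prokhorov estimate (\ref{eq:compdelta}) applied uniformly over $\frakY$, and one passes to continuity points of $\kappa\mapsto\Obs_Y(X;-\kappa)$. The $\sup_{Y\in\frakY}$ in (\ref{eq:lipapprox}) gives exactly the uniformity the lower bound needs, so your non-uniform fallback is unnecessary; it would not suffice anyway, since $Y_n=\{a,b_n\}$ with $d(a,b_n)=1+1/n$ and a two-point $X$ defeat it. Your truncation to $g^{-1}(B(o,R))$, which yields an almost $1$-Lipschitz map into $Y_n$, is a small improvement: it removes the finite-diameter assumption on $X$ that the paper's route to part (2) through Theorem~\ref{thm:pGHlsc} implicitly uses.
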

\par The article is organized as follows. In Section 2, we give fundamental notation and facts of geometry of metric measure spaces. In Section 3, we introduce two types of observable diameters with errors. Each of them fits to the box convergence and the Gromov-Hausdorff one respectively. In Section 4, we introduce a concept of Lipschitz-approximated family, which is to connect between Lipschitz maps and almost Lipschitz maps with errors. See Definition \ref{def:lipapprox} for the precise definition. And we prove the main result.     
%
%
\section{Preliminaries}

In this paper, we call a triplet $X=(X,d_X,\mu_X)$ an \emph{mm space} if $(X,d_X)$ is a complete separable metric space with a Borel probability measure $\mu_X$. Two mm spaces $X,Y$ are \emph{isomorphic} to each other if there exists an isometry $f:\supp\mu_X\rightarrow \supp\mu_Y$ such that $f_{\#}\mu_X=\mu_Y$, where $f_{\#}\mu_X$ is the push-forward measure defined as $f_{\#}\mu_X(A):=\mu_X(f^{-1}(A))$ for any Borel set $A\subset Y$. Denote by $\calX$ the set of all mm-isomorphism classes of mm spaces. In this paper, we always assume $X=\supp\mu_X$ for any mm space. A metric space $(X,d)$ is a \emph{geodesic space} if for any two points $x,y\in X$, there exists a continuous curve $\gamma:[0,1]\rightarrow X$ with $\gamma(0)=x$, $\gamma(1)=y$ such that 
\begin{align}
 L(\gamma)&:=\sup\left\{\sum_{i=1}^nd(\gamma(t_i),\gamma(t_{i-1}))\;;\; 0=t_0<t_1<\cdots<t_n=1\right\}\notag\\
 &=d(x,y).\notag
\end{align}
We call such a curve \emph{geodesic}. A geodesic $\gamma:[0,1]\rightarrow X$ is said to be \emph{constant speed} if $d(\gamma(s),\gamma(t))=\lv t-s\rv d(x,y)$ holds for any $s,t\in[0,1]$. 
\smallskip 
\par For a positive number $\varepsilon>0$, we denote the open $\varepsilon$-ball centered at $x$ and the $\varepsilon$-neighborhood of a subset $A$ by 
\begin{align}
 &U_{\varepsilon}(x):=\left\{y\in X\;;\; d_X(y,x)<\varepsilon\right\},\notag\\
 &U_{\varepsilon}(A):=\left\{y\in X\;;\;d_X(y,A)=\inf_{z\in A}d_X(y,z)<\varepsilon\right\}.\notag
\end{align}
Also we denote the closed $\varepsilon$-ball centered at $x$ by $B_{\varepsilon}(x)$. 
\subsection{Distance functions on mm spaces}
In this subsection, we give some concepts of metrics on spaces or functions. See \cite{BBI,Shioya} for more details. Let $(X,d)$ be a complete separable metric space. We denote by $\calP(X)$ and by $\calB(X)$ the set of all Borel probability measures on $X$ and the set of all Borel measurable subsets in $X$, respectively. 
 \begin{defn}[Prokhorov distance]
  For given two probability measures $\mu,\nu\in\calP(X)$, the \emph{Prokhorov distance} $d_P(\mu,\nu)$ between them is defined to be the infimum of $\varepsilon>0$ satisfying 
  \begin{align}
   \mu(U_{\varepsilon}(A))\geq \nu(A)-\varepsilon\notag
  \end{align}
  for any Borel subset $A\subset X$. 
 \end{defn}
The topology induced from Prokhorov metric coincides with the weak topology on $\calP(X)$.  
 \begin{defn}[Ky Fan metric]
  Let $(Z,\mu)$ be a measure space and $Y$ a separable metric space. For two $\mu$-measurable maps $f,g:Z\rightarrow Y$, the \emph{Ky Fan distance} $d_{KF}^{\mu}(f,g)$ between them is defined to be the infimum of $\varepsilon\geq 0$ satisfying 
  \begin{align}
   \mu\left(\left\{z\in Z\;;\;d_Y(f(z),g(z))>\varepsilon\right\}\right)\leq \varepsilon.\notag
  \end{align}
 \end{defn}
$d_{KF}^{\mu}$ is a metric on the set of all $\mu$-equivalent classes of $\mu$-measurable maps from $Z$ to $Y$. 
 \begin{lem}\label{lem:KFP}
  Let $X$ be a topological space with a Borel probability measure $\mu$ and $Y$ a separable metric space. For any two $\mu$-measurable maps $f,g:X\rightarrow Y$, we have 
  \begin{align}
   d_P(f_{\#}\mu,g_{\#}\mu)\leq d_{KF}^{\mu}(f,g).\notag
  \end{align}
 \end{lem}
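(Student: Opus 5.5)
The plan is to verify the definition of the Prokhorov distance directly. Any admissible $\varepsilon$ for the Ky Fan distance should also be admissible for the Prokhorov distance between the push-forwards; taking infima then gives the inequality.

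First, fix $\varepsilon>d_{KF}^{\mu}(f,g)$. The admissible set in the definition of $d_{KF}^{\mu}$ is upward closed: if $\varepsilon'$ is admissible and $\varepsilon\geq\varepsilon'$, then $\{d_Y(f,g)>\varepsilon\}\subset\{d_Y(f,g)>\varepsilon'\}$. Hence
\begin{align}
 \mu(E)\leq\varepsilon,\qquad E:=\left\{x\in X\;;\;d_Y(f(x),g(x))>\varepsilon\right\}.\notag
\end{align}
The set $E$ is $\mu$-measurable because $f,g$ are $\mu$-measurable into a separable metric space. Separability is what makes $x\mapsto (f(x),g(x))$ measurable into $Y\times Y$ with its product Borel $\sigma$-algebra, so that $d_Y(f,g)$ is measurable. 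I expect this measurability point to be the only subtle step.

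Next, take any Borel set $A\subset Y$. If $x\in g^{-1}(A)\setminus E$, then $g(x)\in A$ and $d_Y(f(x),g(x))\leq\varepsilon$. This gives only the closed neighborhood, so I replace $\varepsilon$ by any $\varepsilon'>\varepsilon$: then $f(x)\in U_{\varepsilon'}(A)$. Therefore $g^{-1}(A)\setminus E\subset f^{-1}(U_{\varepsilon'}(A))$, which yields
\begin{align}
 f_{\#}\mu(U_{\varepsilon'}(A))\geq \mu(g^{-1}(A))-\mu(E)\geq g_{\#}\mu(A)-\varepsilon\geq g_{\#}\mu(A)-\varepsilon'.\notag
\end{align}
Since $A$ is arbitrary, this shows $d_P(f_{\#}\mu,g_{\#}\mu)\leq\varepsilon'$.

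Finally, let $\varepsilon'\downarrow\varepsilon$ and then $\varepsilon\downarrow d_{KF}^{\mu}(f,g)$ to conclude. There is no real obstacle beyond keeping track of the strict versus non-strict inequalities, which the extra slack $\varepsilon'>\varepsilon$ handles, and the measurability remark above.
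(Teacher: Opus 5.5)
Your proof is correct. For $\varepsilon'>\varepsilon>d_{KF}^{\mu}(f,g)$, the inclusion $g^{-1}(A)\setminus E\subset f^{-1}(U_{\varepsilon'}(A))$ gives exactly the Prokhorov condition $f_{\#}\mu(U_{\varepsilon'}(A))\geq g_{\#}\mu(A)-\varepsilon'$, in the order the paper's definition requires. Your measurability remark is also the right justification: separability gives $\mathcal{B}(Y)\otimes\mathcal{B}(Y)=\mathcal{B}(Y\times Y)$.

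The paper states this lemma without a proof of its own and treats it as a standard fact from the references cited at the start of that subsection. Your direct verification is that standard argument.
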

For two topological spaces $Y,Z$ and $\mu\in\calP(Y)$, $\nu\in\calP(Z)$, a Borel probability measure $\xi\in\calP(Y\times Z)$ is called a \emph{coupling between $\mu$ and $\nu$} if 
\begin{align}
 \left\{\begin{aligned}
  &\xi(A\times Z)=\mu(A)\notag\\
  &\xi(Y\times B)=\nu(B)
 \end{aligned}\right.
\end{align} 
holds for any $A\in\calB(Y)$ and $B\in\calB(Z)$. We denote by $\Cpl(\mu,\nu)$ the set of all couplings between $\mu$ and $\nu$. Note that the product of such measures $\mu\otimes \nu$ is a coupling of them, hence $\Cpl(\mu,\nu)\neq \emptyset$. When $Y,Z$ are mm spaces, we denote $\Cpl(Y,Z):=\Cpl(\mu_Y,\mu_Z)$. Now we are able to define the \emph{box distance}. 
 \begin{defn}[Box distance]
  For given $X,Y\in\calX$, the \emph{box distance} $\Box(X,Y)$ is defined as follows; 
  \begin{align}
   \Box(X,Y):=\inf_{\xi\in\Cpl(X,Y)}\inf_{S\subset \calB(X\times Y)}\max\left\{1-\xi(S),\Dis(S)\right\},\notag
  \end{align}
  where 
  \begin{align}
   \Dis(S):=\sup\left\{\lv d_X(x_1,x_2)-d_Y(y_1,y_2)\rv\;;\; (x_1,y_1), (x_2,y_2)\in S\right\}.\notag
  \end{align}
  The function $\Box$ is a metric on $\calX$. 
 \end{defn}
 \begin{rem}
  The box distance is introduced by Gromov \cite{Grom}. The formulation presented here is due to Nakajima \cite{Nbox}. 
 \end{rem}
We denote by $X_n\xrightarrow{\Box}X$ if $\Box(X_n,X)\rightarrow 0$. 
 \begin{defn}
  Let $X,Y$ be mm spaces and $\varepsilon$ a positive number. A map $f:X\rightarrow Y$ is called an \emph{$\varepsilon$-mm isomorphism} if the following conditions are held; there exists a Borel subset $\wtilde{X}\subset X$ such that 
  \begin{enumerate}
   \item $\mu_X(\wtilde{X})\geq 1-\varepsilon$,
   \item $\lv d_X(x_1,x_2)-d_Y(f(x_1),f(x_2))\rv\leq \varepsilon$ for any $x_1,x_2\in \wtilde X$, 
   \item $d_P(f_{\#}\mu_X,\mu_Y)\leq \varepsilon$. 
  \end{enumerate} 
  We call the set $\wtilde{X}$ the \emph{non-exceptional set of $f$}.  
 \end{defn}
 \begin{prop}
  Let $X,Y$ be mm spaces. The following holds. 
  \begin{enumerate}
   \item If there exists an $\varepsilon$-mm isomorphism $f:X\rightarrow Y$, then $\Box(X,Y)\leq 3\varepsilon$. 
   \item If $\Box(X,Y)<\varepsilon$, then there exists a $3\varepsilon$-mm isomorphism between $X$ and $Y$. 
  \end{enumerate}
 \end{prop}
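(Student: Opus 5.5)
For (1), I will build a coupling from $f$ and read off a set of small distortion. Let $\wtilde X$ be the non-exceptional set of $f$, so $\mu_X(\wtilde X)\geq 1-\varepsilon$. Since $d_P(f_{\#}\mu_X,\mu_Y)\leq\varepsilon$, Strassen's theorem gives a coupling $\eta\in\Cpl(f_{\#}\mu_X,\mu_Y)$ with
\begin{align}
\eta\bigl(\{(y,y')\;;\;d_Y(y,y')>\varepsilon\}\bigr)\leq\varepsilon.\notag
\end{align}
I then glue $\eta$ with the graph coupling $(\mathrm{id},f)_{\#}\mu_X$ along the common marginal $f_{\#}\mu_X$, obtaining a measure on $X\times Y\times Y$. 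Its projection to the first and third factors is a coupling $\xi\in\Cpl(X,Y)$.

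Set
\begin{align}
S:=\{(x,y')\;;\;x\in\wtilde X,\ d_Y(f(x),y')\leq\varepsilon\}.\notag
\end{align}
The measure of $S$ is at least $1-2\varepsilon$, because we lose at most $\varepsilon$ from $X\setminus\wtilde X$ and at most $\varepsilon$ from the bad set of $\eta$. For $(x_1,y_1),(x_2,y_2)\in S$, the triangle inequality gives
\begin{align}
\lv d_X(x_1,x_2)-d_Y(y_1,y_2)\rv\leq \varepsilon+2\varepsilon=3\varepsilon.\notag
\end{align}
Hence $\Box(X,Y)\leq\max\{2\varepsilon,3\varepsilon\}=3\varepsilon$. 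If $S$ is not Borel, I replace it by a closed or Borel version; that is routine.

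For (2), take $\xi$ and $S$ with $1-\xi(S)<\varepsilon$ and $\Dis(S)<\varepsilon$. Replacing $S$ by a compact subset of slightly smaller measure, I may assume $S$ is compact, by inner regularity of $\xi$ on the Polish space $X\times Y$. Its projection $\wtilde X=\pi_X(S)$ is then compact and satisfies $\mu_X(\wtilde X)\geq\xi(S)>1-\varepsilon$.

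By a measurable selection theorem (Kuratowski–Ryll-Nardzewski, applied to the closed-valued correspondence $x\mapsto S_x$), there is a Borel map $g:\wtilde X\to Y$ whose graph lies in $S$. I extend $g$ to all of $X$ by a constant, giving $f$. Distortion on $\wtilde X$ is then at most $\varepsilon$, since graph points lie in $S$.

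The main obstacle is the Prokhorov estimate $d_P(f_{\#}\mu_X,\mu_Y)$. For $(x,y)\in S$, both $(x,y)$ and $(x,f(x))$ lie in $S$. Comparing these two points gives
\begin{align}
d_Y(y,f(x))=\lv d_X(x,x)-d_Y(y,f(x))\rv\leq\varepsilon.\notag
\end{align}
So the map $(x,y)\mapsto (f(x),y)$ pushes $\xi$ to a coupling of $f_{\#}\mu_X$ and $\mu_Y$ that is concentrated within distance $\varepsilon$ off a set of measure less than $\varepsilon$. The coupling characterization of Prokhorov distance then yields $d_P\leq\varepsilon$.

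Altogether $f$ is an $\varepsilon$-mm isomorphism, hence a $3\varepsilon$-mm isomorphism. The generous constant also leaves room to absorb any losses from the compact approximation or from strict versus non-strict inequalities.
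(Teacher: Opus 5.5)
The paper does not prove this proposition; it quotes it from the literature. There (Shioya's book) $\Box$ is defined through parametrizations $[0,1)\to X$, $[0,1)\to Y$, whereas the paper adopts Nakajima's coupling formulation.

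Your argument is correct. Its main merit is that it works directly with the definition of $\Box$ the paper actually uses, so no appeal to the equivalence of the two formulations is needed.

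Part (1) follows the same mechanism as the classical argument: a coupling realizing $d_P(f_{\#}\mu_X,\mu_Y)\le\varepsilon$, then the $\varepsilon+2\varepsilon$ triangle inequality. The gluing lemma does the work that choosing compatible parameters does in the parametric setting.

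In part (2) the coupling picture is especially economical. Since $(x,y)$ and $(x,f(x))$ both lie in the compact set $S$, you get $d_Y(y,f(x))\le\Dis(S)<\varepsilon$, which settles the Prokhorov condition at once. You end up with an $\varepsilon$-mm isomorphism, so the factor $3$ in (2) is pure slack in this formulation.

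The cost is heavier measure-theoretic input:
\begin{itemize}
\item Strassen's theorem with an attained coupling. This is fine here because $Y$ is Polish, so couplings form a tight family and $\eta\mapsto\eta(\{d_Y>\varepsilon'\})$ is lower semicontinuous.
\item A gluing (or disintegration) lemma.
\item The Kuratowski--Ryll-Nardzewski theorem.
\end{itemize}

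Two minor points:
\begin{itemize}
\item The set $S$ in (1) is automatically Borel, because $f$ and $\wtilde X$ are Borel. Your hedge there is unnecessary.
\item For the selection theorem, you should record weak measurability of $x\mapsto S_x$. It holds because $\{x\,;\,S_x\cap F\neq\emptyset\}=\pi_X(S\cap(X\times F))$ is compact for every closed $F\subset Y$. Hence $\{x\,;\,S_x\cap U\neq\emptyset\}$ is $F_\sigma$ for every open $U\subset Y$.
\end{itemize}
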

We also know another type of metric on compact mm spaces, namely the \emph{Gromov-Hausdorff distance}. 
 \begin{defn}[Hausdorff distance]
  Let $(Z,\rho)$ be a metric space.  
  Then the Hausdorff metric $H\!\rho$ on $2^Z$ is defined by 
  \begin{align}
   H\!\rho(A,B):=\inf\left\{\varepsilon>0\;;\;U_{\varepsilon}(A)\supset B,\;U_{\varepsilon}(B)\supset A\right\}.\notag
  \end{align}
  $H\!\rho$ is a distance function on $\calK(Z)$ that is the set of all compact subsets in $Z$.
 \end{defn}
 \begin{defn}[Gromov-Hausdorff distance]
  Let $\calC$ be a set of all isometric classes of compact metric spaces. The \emph{Gromov-Hausdorff distance} $d_{GH}$ on $\calC$ is defined by  
  \begin{align}
   d_{GH}(A,B):=\inf_{Z,\varphi,\psi}H\!d_Z(\varphi(A),\psi(B))\notag
  \end{align}
  where $Z,\varphi,\psi$ run over all which $(Z,d_Z)$ is a separable metric space, $\varphi:A\rightarrow Z$, $\psi:B\rightarrow Z$ are isometric embeddings. 
 \end{defn}
We are able to translate the Gromov-Hausdorff distance into the existence of approximation maps. 
 \begin{defn}
  Let $Y,Z$ be compact metric spaces and $\varepsilon$ a positive number. A Borel measurable map $f:Y\rightarrow Z$ is called an \emph{$\varepsilon$-approximation map} if 
  \begin{enumerate}
   \item $\lv d_Y(y_1,y_2)-d_Z(f(y_1),f(y_2))\rv<\varepsilon$, 
   \item $U_{\varepsilon}(f(Y))\supset Z$. 
  \end{enumerate}
 \end{defn}
 \begin{prop}
  Let $X,Y$ be compact metric spaces with $d_{GH}(X,Y)<\varepsilon$, then there exists a $3\varepsilon$-approximation map $f:X\rightarrow Y$. Conversely, if there exists an $\varepsilon$-approximation map from $X$ to $Y$, then it holds $d_{GH}(X,Y)<3\varepsilon$. 
 \end{prop}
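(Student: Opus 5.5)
The statement has two directions, and I would treat each in the usual way: first extract an approximation map from an almost-optimal pair of embeddings, then glue the two spaces together along the approximation map.

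\emph{From $d_{GH}(X,Y)<\varepsilon$ to a $3\varepsilon$-approximation map.} I would fix a separable metric space $(Z,d_Z)$ and isometric embeddings $\varphi:X\to Z$, $\psi:Y\to Z$ with $H\!d_Z(\varphi(X),\psi(Y))<\varepsilon$. Fix a small $\delta>0$, to be chosen with $2\varepsilon+2\delta<3\varepsilon$.
\begin{enumerate}
\item Since $X$ is compact, pick a finite $\delta$-net $\{x_1,\dots,x_N\}$ of $X$.
\item Form the Borel partition $A_i:=U_\delta(x_i)\setminus\bigcup_{j<i}U_\delta(x_j)$, discarding empty pieces.
\item For each $i$ choose $y_i\in Y$ with $d_Z(\varphi(x_i),\psi(y_i))<\varepsilon$, and define $f\equiv y_i$ on $A_i$.
\end{enumerate}
This $f$ is a simple Borel map. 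Measurability is exactly why I use the partition rather than an arbitrary pointwise choice.

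For every $x\in X$ we then have $d_Z(\varphi(x),\psi(f(x)))<\varepsilon+\delta$.
\begin{itemize}
\item The triangle inequality in $Z$ gives $\lv d_X(x,x')-d_Y(f(x),f(x'))\rv<2\varepsilon+2\delta<3\varepsilon$.
\item Given $y\in Y$, choose $x$ with $d_Z(\varphi(x),\psi(y))<\varepsilon$. Then $d_Y(f(x),y)<2\varepsilon+\delta<3\varepsilon$, so $U_{3\varepsilon}(f(X))\supset Y$.
\end{itemize}

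\emph{From an $\varepsilon$-approximation map $f:X\to Y$ to $d_{GH}(X,Y)<3\varepsilon$.} Set $r:=\varepsilon/2$ and put a metric on $Z:=X\sqcup Y$ that restricts to $d_X$ and $d_Y$, with the cross term
\begin{align}
 d(x,y):=\inf_{x'\in X}\bigl(d_X(x,x')+r+d_Y(f(x'),y)\bigr).\notag
\end{align}
Positivity is clear since $d(x,y)\geq r>0$. The triangle inequalities of the forms $X$--$X$--$Y$ and $Y$--$Y$--$X$ are immediate from the definition.

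The real checks are the two mixed cases.
\begin{itemize}
\item For $x_1,x_2\in X$ and $y\in Y$ we need $d_X(x_1,x_2)\leq d(x_1,y)+d(y,x_2)$. The right-hand side is, up to infima, at least
\begin{align}
 d_X(x_1,x_1')+d_X(x_2,x_2')+d_Y(f(x_1'),f(x_2'))+2r.\notag
\end{align}
Since the distortion of $f$ is less than $\varepsilon=2r$, this exceeds $d_X(x_1',x_2')+d_X(x_1,x_1')+d_X(x_2,x_2')\ge d_X(x_1,x_2)$.
\item The case $y_1,y_2\in Y$, $x\in X$ is symmetric. It uses $d_Y(f(x_1'),f(x_2'))<d_X(x_1',x_2')+2r$ together with the triangle inequalities in $X$ and $Y$.
\end{itemize}
This mixed-case verification is the only delicate step; note that it needs only the non-strict bound $\text{distortion}\le 2r$, which is why the choice $r=\varepsilon/2$ works.

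Finally, $Z$ is compact and hence separable, and both inclusions are isometric. Every $x$ satisfies $d(x,f(x))\le r$. Every $y$ has some $x'$ with $d_Y(f(x'),y)<\varepsilon$, hence $d(x',y)<r+\varepsilon$. Therefore $H\!d_Z(X,Y)\le 3\varepsilon/2<3\varepsilon$, which gives $d_{GH}(X,Y)<3\varepsilon$.
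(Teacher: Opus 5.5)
The paper states this proposition without proof; it is quoted as a standard fact (cf.\ \cite{BBI}), so there is no argument of the authors' to compare against. Your proof is correct and is essentially the textbook argument.

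In the first direction, your finite Borel partition is exactly what the paper's definition needs, since approximation maps there must be Borel and an arbitrary pointwise choice of $f(x)$ would not guarantee that. The resulting bounds $2\varepsilon+2\delta$ and $2\varepsilon+\delta$ fit inside $3\varepsilon$.

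In the converse direction, the glued metric with $r=\varepsilon/2$ is checked correctly in both mixed cases. You in fact obtain $H\!d_Z(X,Y)\le 3\varepsilon/2$, which is sharper than the stated $3\varepsilon$.
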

 \begin{defn}[Measured Gromov-Hausdorff convergence]
  Let $X_n,X$ be compact mm spaces. Then $X_n$ converges to $X$ in the \emph{measured Gromov-Hausdorff sense}, we write it by $X_n\xrightarrow{mGH}X$, if there exists a decreasing sequence of positive numbers $\varepsilon_n\downarrow0$, $\varepsilon_n$-approximation maps $f_n:X_n\rightarrow X$ such that $(f_n)_{\#}\mu_{X_n}\rightarrow \mu_X$ weakly.  
 \end{defn}
 \begin{rem}
  As aforementioned, we always assume $\supp\mu_X=X$ for any mm spaces. Hence the above definition makes sense. 
 \end{rem}
 Let $Y$ be a metric space and $o\in Y$. We call a pair $(Y,o)$ a pointed metric space. Finally we are able to define the \emph{pointed Gromov-Hausdorff convergence} 
 \begin{defn}
  Let $(Y_n,o_n)$, $(Y,o)$ be proper pointed metric spaces. We say that $(Y_n,o_n)$ converges to $(Y,o)$ in the pointed Gromov-Hausdorff sense, write it by $(Y_n,o_n)\xrightarrow{pGH}(Y,o)$, if there exist increasing numbers $R_n\uparrow\infty$, decreasing numbers $\varepsilon_n\downarrow0$, sequence of Borel measurable maps $f_n:B_{R_n}(o)\rightarrow Y_n$ such that it holds 
  \begin{enumerate}
   \item $\lv d_{Y_n}(f_n(z_1),f_n(z_2))-d_Y(z_1,z_2)\rv<\varepsilon_n$ for any $z_1,z_2\in B_{R_n}(o)$, 
   \item $U_{\varepsilon_n}(f_n(B_{R_n}(o)))\supset B_{R_n-\varepsilon_n}(o_n)$. 
  \end{enumerate}
 \end{defn}
 \begin{rem}
  In the case where all $Y_n, Y$ are geodesic spaces, then the condition (2) can be replaced by 
  \begin{enumerate}
   \item[(2)'] $U_{\varepsilon_n}(f_n(B_{R_n}(o)))\supset B_{R_n}(o_n)$. 
  \end{enumerate}
 \end{rem}
 \begin{rem}
  It is the standard way to define the pointed Gromov-Hausdorff convergence by using maps from $B_{R_n}(o_n)$ to $Y$ instead of $f:B_{R_n}(o)\rightarrow Y_n$. Both definitions are equivalent. 
 \end{rem}
\subsection{Observable diameter}
 \begin{defn}[Partial diameter]
  Let $Z$ be a metric space and $\kappa>0$. For a probability measure $\mu\in\calP(Z)$, we define the \emph{partial diameter of $\mu$} by 
  \begin{align}
   \di(\mu;1-\kappa):=\inf\left\{\di (A)\;;\; A\in\calB(Z),\;\mu(A)\geq 1-\kappa\right\}.\notag
  \end{align}
 \end{defn}
The set of all 1-Lipschitz maps from $X$ to $Y$ is denoted by $\Lip_1(X,Y)$. 
 \begin{defn}[Observable diameter]\label{def:obs}
  Let $X$ be an mm space and $Y$ a metric space. For $\kappa>0$, the \emph{observable diameter of $X$ with screen $Y$} is defined by 
  \begin{align}
   \Obs_Y(X;-\kappa):=\sup\left\{\di(f_{\#}\mu_X;1-\kappa)\;;\; f\in \Lip_1(X,Y)\right\}.\notag
  \end{align} 
 \end{defn}
By the same argument in \cite{OSh}, we have the following. 
 \begin{prop}[\cite{OSh}]
  The map $\kappa\mapsto \Obs_Y(X;-\kappa)$ is right-continuous. 
 \end{prop}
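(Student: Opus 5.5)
We have to prove that $\kappa\mapsto \Obs_Y(X;-\kappa)$ is right-continuous, following the argument of \cite{OSh}.

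First observe that $\di(\mu;1-\kappa)$ is nonincreasing in $\kappa$, since enlarging $\kappa$ enlarges the admissible family of sets $A$. Taking the supremum over $f\in\Lip_1(X,Y)$, the function $\Obs_Y(X;-\kappa)$ is also nonincreasing. Hence the limit $\lim_{\kappa'\downarrow\kappa}\Obs_Y(X;-\kappa')$ exists and is at most $\Obs_Y(X;-\kappa)$. It remains to prove the reverse inequality.

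The plan is to establish right-continuity first for the partial diameter of a single measure, and then pass to the supremum. Fix $\mu=f_{\#}\mu_X$, and let $D=\lim_{\kappa'\downarrow\kappa}\di(\mu;1-\kappa')$; we may assume $D<\infty$. Take $\kappa_j\downarrow\kappa$. For each $j$ choose a Borel set $A_j$ with $\mu(A_j)\ge 1-\kappa_j$ and $\di(A_j)\le D+1/j$; we may take $A_j$ closed, since closure preserves the diameter.

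The key step is to produce from the $A_j$ a single set of measure at least $1-\kappa$ and diameter at most $D$.
\begin{itemize}
\item If $\mu$ is tight (it is the pushforward of a Borel probability measure on the complete separable space $X$, so it is concentrated on a $\sigma$-compact set), restrict to a compact $K$ with $\mu(K)\ge 1-\eta$.
\item Use the Blaschke selection theorem, i.e.\ compactness of $\calK(K)$ in the Hausdorff metric, to extract a Hausdorff limit $A$ of $A_j\cap K$ along a subsequence.
\item By upper semicontinuity of the mass of closed sets under Hausdorff convergence, $\mu(A)\ge\limsup\mu(A_j\cap K)\ge 1-\kappa-\eta$, and $\di(A)\le D$.
\end{itemize}
The loss $\eta$ is exactly where the difficulty lies. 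To avoid it, instead take $A=\bigcap_m\overline{\bigcup_{j\ge m}A_j}$. This set does not obviously have diameter at most $D$, so in the real-line case one instead uses intervals, parametrized by their endpoints, which makes compactness immediate. I expect this to be the main obstacle: for general $Y$ one must work with the supremum of diameters under tightness directly. I would try handling it via $\limsup$ sets of $1/j$-neighborhoods.

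Given right-continuity of each $\kappa\mapsto\di(f_{\#}\mu_X;1-\kappa)$, the statement follows. Let $\kappa'\downarrow\kappa$ and fix $f$ with $\di(f_{\#}\mu_X;1-\kappa)>\Obs_Y(X;-\kappa)-\varepsilon$. Then
\[
\lim_{\kappa'\downarrow\kappa}\Obs_Y(X;-\kappa')\ \ge\ \lim_{\kappa'\downarrow\kappa}\di(f_{\#}\mu_X;1-\kappa')\ =\ \di(f_{\#}\mu_X;1-\kappa)\ >\ \Obs_Y(X;-\kappa)-\varepsilon.
\]
Letting $\varepsilon\to0$ gives the reverse inequality and completes the proof.
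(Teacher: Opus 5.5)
\textbf{There is a genuine gap: the key step is never proved.}

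Your monotonicity argument and your closing step are correct. A supremum of nonincreasing right-continuous functions is right-continuous. The paper gives no proof of its own; it only says ``by the same argument in \cite{OSh}'', and your reduction is presumably what that refers to.

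The gap is the one you flag yourself. You never establish that $\kappa\mapsto\di(\mu;1-\kappa)$ is right-continuous for a tight $\mu$ on a general screen $Y$. For $Y\neq\R$ this is the entire content of the statement. On $\R$ it comes for free, because near-optimal sets can be taken to be closed intervals parametrized by their endpoints.

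Your attempts at this step do not work:
\begin{itemize}
\item The bullet argument with a single compact $K$ only yields $\di(\mu;1-\kappa-\eta)\le D$ for every $\eta>0$. That is just the definition of $D$ restated, so it is circular.
\item The closed $\limsup$ set $\bigcap_m\overline{\bigcup_{j\ge m}A_j}$ has the right mass but uncontrolled diameter, because two of its points need not lie in or near a common $A_j$.
\item A $\limsup$ of $1/j$-neighbourhoods of the $A_j$ has the same defect.
\item The $\liminf$ controls the diameter but not the mass.
\end{itemize}

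\textbf{How to close it.} Your own first idea works if you do not freeze one compact set. Take compact sets $K_1\subset K_2\subset\cdots$ with $\mu(K_m)>1-1/m$. By Blaschke's theorem and a diagonal argument, pass to a subsequence with $A_j\cap K_m\to A^{(m)}$ in $\calK(K_m)$ for every $m$ large enough that these sets are nonempty. Then:
\begin{itemize}
\item $\di A^{(m)}\le D$, by Hausdorff continuity of the diameter;
\item $\mu(A^{(m)})\ge 1-\kappa-1/m$, by the upper semicontinuity you cite;
\item $A^{(m)}\subset A^{(m+1)}$, since Hausdorff limits of closed sets preserve inclusion.
\end{itemize}
Hence $A:=\bigcup_mA^{(m)}$ satisfies $\di A\le D$, because any two of its points lie in a common $A^{(m)}$. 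It also satisfies $\mu(A)\ge 1-\kappa$. This gives $\di(\mu;1-\kappa)\le D$, and in fact the infimum is attained.

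Alternatively, apply Prokhorov's theorem to the uniformly tight sub-probability measures $\mu|_{A_j}$. A weak limit $\nu$ satisfies $\nu\le\mu$ and $\nu(Y)\ge 1-\kappa$. For $x,y\in\supp\nu$ and $\varepsilon>0$, the portmanteau theorem gives $A_j\cap U_\varepsilon(x)\neq\emptyset$ and $A_j\cap U_\varepsilon(y)\neq\emptyset$ for all large $j$. So $\di\supp\nu\le D$, while $\mu(\supp\nu)\ge 1-\kappa$.
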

 \begin{rem}
  For any fixed $\kappa\in(0,1)$ and any normed space $Y$, $X\mapsto \Obs_Y(X;-\kappa)$ is never continuous with respect to neither the $\Box$-topology nor the concentration topology \cite{KNS}*{Corollary 3.9}. 
 \end{rem}
%
%
\section{Observable diameter with errors}
In this section, we define two types of observable diameters with errors, which are modified versions of the observable diameter. In order to define that, we need the following definition. 
 \begin{defn}
  Let $X$ be an mm space and $Y$ a metric space. For given $\delta>0$, a Borel measurable map $f:X\rightarrow Y$ is called an \emph{almost 1-Lipschitz map with an error $\delta$} if there exists a Borel subset $\wtilde{X}\subset X$ with $\mu_{X}(\wtilde{X})\geq 1-\delta$ such that 
  \begin{align}
   d_Y(f(z_1),f(z_2))\leq d_X(z_1,z_2)+\delta\notag
  \end{align}
  holds for any $z_1,z_2\in \wtilde{X}$. We call $\tilde X$ the non-exceptional set. 
  \par A Borel measurable map $g:X\rightarrow Y$ is called a \emph{1-Lipschitz map with an error $\delta$} if 
  \begin{align}
   d_Y(g(z_1),g(z_2))\leq d_X(z_1,z_2)+\delta\notag
  \end{align}
  holds for any $z_1,z_2\in X$. We denote 
  \begin{align} 
   &\Lip_1^{\delta}(X,Y):=\left\{f:X\rightarrow Y\;;\;\text{ $f$ is a 1-Lipschitz map with error $\delta$}\right\},\notag\\
   &\wtilde{\Lip_1^{\delta}}(X,Y):=\left\{f:X\rightarrow Y\;;\;\text{ $f$ is an almost 1-Lipschitz map with error $\delta$}\right\}.\notag
  \end{align}
 \end{defn}
 \begin{rem}
  Almost 1-Lipschitz map with an error $\delta$ is called \emph{1-Lipschitz up to $\delta$} or \emph{1-Lipschitz up to an additive error $\delta$} \cite{Grom,Shioya}. 
 \end{rem}
It is clear that $\Lip_1\subset \Lip_1^{\delta}\subset \wtilde{\Lip_1^{\delta}}$. It is an important remark that each $\delta$-mm isomorphism (resp. $\delta$-approximation map) $f:X\rightarrow Y$ belongs to $\wtilde{\Lip_1^{\delta}}(X,Y)$ (resp. $\Lip_1^{\delta}(X,Y)$). We define the observable diameter with error. 
 \begin{defn}\label{def:obserrors}
  Let $X$ be an mm space and $Y$ a metric space. For given $\delta>0$ and $\kappa>0$, we define 
  \begin{align}
   &\Obs_Y^{\delta}(X;-\kappa):=\sup\left\{\di(f_{\#}\mu_X;1-\kappa)\;;\;f\in \Lip_1^{\delta}(X,Y)\right\},\notag\\
   &\wtilde{\Obs_Y^{\delta}}(X;-\kappa):=\sup\left\{\di(f_{\#}\mu_X;1-\kappa)\;;\;f\in \wtilde{\Lip_1^{\delta}}(X,Y)\right\}.\notag
  \end{align}
  Since $\delta\mapsto \Obs_Y^{\delta},\;\delta\mapsto\wtilde{\Obs_Y^{\delta}}$ are nondecreasing and nonnegative, we are able to define the followings; 
  \begin{align}
   &\Obs_Y^+(X;-\kappa):=\lim_{\delta\downarrow 0}\Obs_Y^{\delta}(X;-\kappa),\notag\\
   &\wtilde{\Obs_Y^+}(X;-\kappa):=\lim_{\delta\downarrow 0}\wtilde{\Obs_Y^{\delta}}(X;-\kappa).\notag
  \end{align}
 \end{defn}
\subsection{Observable diameter and box topology}
First we give a proposition that is a relation of the observable diameter and the box topology. 
 \begin{prop}\label{prop:arbbox}
  Let $X_n,X$ be mm spaces and $Y$ a metric space. Assume $X_n\xrightarrow{\Box}X$. Then we have 
  \begin{align}
   &\Obs_Y(X;-\kappa)\leq \lim_{\varepsilon\rightarrow+0}\liminf_{n\rightarrow\infty}\wtilde{\Obs_Y^{\varepsilon}}(X_n;-\kappa),\notag\\
   &\wtilde{\Obs_Y^+}(X;-\kappa)\geq \lim_{\varepsilon\rightarrow+0}\limsup_{n\rightarrow\infty}\Obs_Y(X_n;-(\kappa+\varepsilon))\notag
  \end{align}
 \end{prop}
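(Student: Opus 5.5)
For both inequalities we transport maps through an $\varepsilon$-mm isomorphism. Since $\Box(X_n,X)\to0$, part (2) of the proposition relating $\Box$ to $\varepsilon$-mm isomorphisms gives, for each $n$, a $3\varepsilon_n$-mm isomorphism $\varphi_n:X_n\to X$ with $\varepsilon_n:=\Box(X_n,X)+1/n\to0$. The same proposition applied with the roles of $X_n$ and $X$ exchanged gives a $3\varepsilon_n$-mm isomorphism $\psi_n:X\to X_n$. Denote their non-exceptional sets by $\wtilde X_n\subset X_n$ and $\wtilde X\subset X$. The maps then let us push Lipschitz maps in either direction while keeping control of the push-forward measures in the Prokhorov metric.

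\emph{First inequality.} Take $f\in\Lip_1(X,Y)$ and set $g_n:=f\circ\varphi_n$. On $\wtilde X_n$ we have
\[
d_Y(g_n(x_1),g_n(x_2))\le d_X(\varphi_n(x_1),\varphi_n(x_2))\le d_{X_n}(x_1,x_2)+3\varepsilon_n ,
\]
so $g_n\in\wtilde{\Lip_1^{3\varepsilon_n}}(X_n,Y)$. Since $f$ is $1$-Lipschitz, $d_P((g_n)_\#\mu_{X_n},f_\#\mu_X)\le d_P((\varphi_n)_\#\mu_{X_n},\mu_X)\le3\varepsilon_n$. We then need the standard semicontinuity of the partial diameter: if $d_P(\nu,\mu)\le\eta$, then
\[
\di(\mu;1-(\kappa+\eta))\le\di(\nu;1-\kappa)+2\eta .
\]
To see this, take $A$ with $\nu(A)\ge1-\kappa$; then $\mu(U_\eta(A))\ge1-\kappa-\eta$ and $\di U_\eta(A)\le\di A+2\eta$. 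This estimate loses a shift in $\kappa$, and that shift is the main obstacle for the first inequality.

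I would avoid the shift by modifying $f$ so that closeness holds at the level of sets. For $A\subset Y$ with $f_\#\mu_X(A)\ge1-\kappa-3\varepsilon_n$ we have $\mu_X(f^{-1}A)\ge1-\kappa-3\varepsilon_n$. Then $g_n^{-1}(U_\eta A)\supset\varphi_n^{-1}(U_{3\varepsilon_n}f^{-1}A)$ up to the exceptional set, and this only moves mass by $O(\varepsilon_n)$. So again only $\kappa+O(\varepsilon_n)$ is reachable, and this route gives
\[
\Obs_Y(X;-(\kappa+c\varepsilon))\le\liminf_n\wtilde{\Obs_Y^{c\varepsilon}}(X_n;-\kappa)+c\varepsilon .
\]
To remove the $\kappa$-shift I would use the error slot instead of a shift in $\kappa$. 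Let $\varepsilon\to0$ and use the right-continuity of $\kappa\mapsto\Obs_Y(X;-\kappa)$ (the proposition from \cite{OSh}): $\Obs_Y(X;-(\kappa+c\varepsilon))\to\Obs_Y(X;-\kappa)$ as $\varepsilon\downarrow0$. The left side then tends to $\Obs_Y(X;-\kappa)$, while the right side is the stated double limit, which exists because $\wtilde{\Obs_Y^\varepsilon}$ is monotone in $\varepsilon$. Taking the supremum over $f$ finishes the first inequality.

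\emph{Second inequality.} Take $f_n\in\Lip_1(X_n,Y)$ nearly attaining $\Obs_Y(X_n;-(\kappa+\varepsilon))$, and set $h_n:=f_n\circ\psi_n:X\to Y$. On $\wtilde X$ the same computation shows $h_n\in\wtilde{\Lip_1^{3\varepsilon_n}}(X,Y)$, and $d_P((h_n)_\#\mu_X,(f_n)_\#\mu_{X_n})\le3\varepsilon_n$. Applying the partial-diameter estimate in the other direction with $\eta=3\varepsilon_n\le\varepsilon$ gives
\[
\di((f_n)_\#\mu_{X_n};1-(\kappa+\varepsilon))\le\di((h_n)_\#\mu_X;1-\kappa)+6\varepsilon_n\le\wtilde{\Obs_Y^{3\varepsilon_n}}(X;-\kappa)+6\varepsilon_n .
\]
Take $\limsup_n$. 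Since $\wtilde{\Obs_Y^{\delta}}$ is monotone in $\delta$, we have $\wtilde{\Obs_Y^{3\varepsilon_n}}(X;-\kappa)\le\wtilde{\Obs_Y^{\delta}}(X;-\kappa)$ for every $\delta>0$ and all large $n$. Letting $\delta\downarrow0$ and then $\varepsilon\to0$ yields the second inequality. Here the $\kappa$-shift is built into the statement, so this direction is routine.
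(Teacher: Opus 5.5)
Your proposal is correct and is essentially the paper's proof. You compose with mm-isomorphisms $X_n\to X$ (resp.\ $X\to X_n$), use the Prokhorov bound to trade a shift of $\kappa$ for an additive $2\eta$ in the partial diameter, and remove the $\kappa$-shift in the first inequality by right-continuity of $\kappa\mapsto\Obs_Y(X;-\kappa)$. The paper does this last step along $n\to\infty$ with the error $\varepsilon$ fixed, while you tie the shift to $\varepsilon$ and let $\varepsilon\to0$; both give the same double-limit inequality. The paragraph about ``modifying $f$'' is an unnecessary and somewhat muddled detour: the inequality you want there follows directly from your partial-diameter estimate with $\eta=3\varepsilon_n\le\varepsilon$, together with monotonicity in $\kappa$ and in the error.
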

 \begin{proof}
  Take a positive number $\varepsilon>0$ and fix it. By the assumption, there exist sequences of positive numbers $\varepsilon_n\downarrow0$, $\varepsilon_n$-mm isomorphisms $f_n:X_n\rightarrow X$. For sufficiently large $n$, it holds $\varepsilon_n<\varepsilon$. From now on, we assume $\varepsilon_n<\varepsilon$. Take any $F\in\Lip_1(X,Y)$. Then a map $G:=F\circ f_n: X_n\rightarrow Y$ is an almost 1-Lipschitz map with error $\varepsilon_n$. Note that $d_P({f_n}_{\#}\mu_{X_n},\mu_X)\leq \varepsilon_n$ by the definition of mm-isomorphism $f_n$. 
  \par Let $A\subset Y$ be a Borel subset with $G_{\#}\mu_{X_n}(A)\geq 1-\kappa$. Since $F$ is 1-Lipschitz, $U_{\varepsilon}(F^{-1}(A))\subset F^{-1}(U_{\varepsilon}(A))$ holds. Then we have 
  \begin{align}
   F_{\#}\mu_X(U_{\varepsilon_n}(A))&=\mu_X(F^{-1}(U_{\varepsilon_n}(A)))\geq \mu_X(U_{\varepsilon_n}(F^{-1}(A)))\notag\\
   &\geq (f_n)_{\#}\mu_{X_n}(F^{-1}(A))-\varepsilon_n\notag\\
   &=G_{\#}\mu_{X_n}(A)-\varepsilon_n\geq 1-\kappa-\varepsilon_n.\notag
  \end{align}
  The inequality $\di(U_{\varepsilon_n}(A))\leq \di(A)+2\varepsilon_n$ implies, 
  \begin{align}
   \di(F_{\#}\mu_X;1-\kappa-\varepsilon_n)\leq \di(U_{\varepsilon_n}(A))\leq \di(A)+2\varepsilon_n. \notag
  \end{align}
  Since $A$ is arbitrary, we have 
  \begin{align}
   \di(F_{\#}\mu_X;1-\kappa-\varepsilon_n)&\leq \di(G_{\#}\mu_{X_n};1-\kappa)+2\varepsilon_n\notag\\
   &\leq \wtilde{\Obs_Y^{\varepsilon_n}}(X_n;-\kappa)+2\varepsilon_n.\notag
  \end{align}
  Also, since $F\in \Lip_1(X,Y)$ is arbitrary, we obtain 
  \begin{align}
   \Obs_Y(X;-(\kappa+\varepsilon_n))&\leq \wtilde{\Obs_Y^{\varepsilon_n}}(X_n;-\kappa)+2\varepsilon_n\notag\\
   &\leq \wtilde{\Obs_Y^{\varepsilon}}(X_n;-\kappa)+2\varepsilon_n.\notag
  \end{align}
  From the right continuity as a function of $\kappa$ of $\Obs_Y(X;-\kappa)$, we have 
  \begin{align}
   \Obs_Y(X;-\kappa)\leq \liminf_{n\rightarrow\infty}\wtilde{\Obs_Y^{\varepsilon}}(X_n;-\kappa).\notag
  \end{align} 
  Since $\varepsilon>0$ is arbitrary, we conclude. 
  \smallskip
  \par Instead of $\varepsilon_n$-mm isomorphisms $f_n:X_n\rightarrow X$, using $\varepsilon_n$-mm isomorphisms $g_n:X\rightarrow X_n$ leads the result by the same argument above. 
 \end{proof}  
 \begin{rem}
  It is known that if $X_n\xrightarrow{\Box}X$, then 
  \begin{align}
   \Obs_{\R}(X;-\kappa)&=\lim_{\varepsilon\rightarrow+0}\liminf_{n\rightarrow\infty}\Obs_{\R}(X_n;-(\kappa+\varepsilon))\notag\\
   &=\lim_{\varepsilon\rightarrow+0}\limsup_{n\rightarrow\infty}\Obs_{\R}(X_n;-(\kappa+\varepsilon))\notag
  \end{align}
  holds \cite{OSh}. 
 \end{rem}
 By a similar argument, we obtain the following corollary. 
 \begin{cor}
  Let $X_n,X$ be compact mm spaces and $Y$ a metric space. Assume $X_n\xrightarrow{mGH}X$. Then we have 
  \begin{align}
   &\Obs_Y(X;-\kappa)\leq \lim_{\varepsilon\rightarrow+0}\liminf_{n\rightarrow\infty}\Obs_Y^{\varepsilon}(X_n;-\kappa),\notag\\
   &\Obs_Y^+(X;-\kappa)\geq \lim_{\varepsilon\rightarrow+0}\limsup_{n\rightarrow\infty}\Obs_Y(X_n;-(\kappa+\varepsilon)).\notag
  \end{align}
 \end{cor}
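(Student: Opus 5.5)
We mimic the proof of Proposition \ref{prop:arbbox}, replacing $\varepsilon_n$-mm isomorphisms by $\varepsilon_n$-approximation maps. Since $X_n\xrightarrow{mGH}X$, we fix $\varepsilon_n\downarrow 0$ and $\varepsilon_n$-approximation maps $f_n:X_n\to X$ with $(f_n)_{\#}\mu_{X_n}\to\mu_X$ weakly. Put $\delta_n:=\max\{\varepsilon_n,\,d_P((f_n)_{\#}\mu_{X_n},\mu_X)\}$; then $\delta_n\to0$, because the Prokhorov metric metrizes weak convergence. The gain over the box case is that an approximation map distorts distances by less than $\varepsilon_n$ for \emph{all} pairs of points, not only on a non-exceptional set. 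Hence, for $F\in\Lip_1(X,Y)$, the composition $G:=F\circ f_n$ satisfies
\begin{align}
 d_Y(G(z_1),G(z_2))\leq d_X(f_n(z_1),f_n(z_2))\leq d_{X_n}(z_1,z_2)+\varepsilon_n\notag
\end{align}
for every $z_1,z_2\in X_n$. Thus $G\in\Lip_1^{\varepsilon_n}(X_n,Y)$, not merely $\wtilde{\Lip_1^{\varepsilon_n}}$. $G$ is Borel because $f_n$ is Borel and $F$ is continuous.

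For the first inequality, fix $\varepsilon>0$ and take $n$ so large that $\delta_n<\varepsilon$. Let $A\subset Y$ be Borel with $G_{\#}\mu_{X_n}(A)\geq1-\kappa$. Repeating the chain of inequalities in the proof of Proposition \ref{prop:arbbox}, with the inclusion $U_{\delta_n}(F^{-1}(A))\subset F^{-1}(U_{\delta_n}(A))$ and the Prokhorov bound, we get $F_{\#}\mu_X(U_{\delta_n}(A))\geq 1-\kappa-\delta_n$. Together with $\di(U_{\delta_n}(A))\le\di(A)+2\delta_n$ this gives
\begin{align}
 \Obs_Y(X;-(\kappa+\delta_n))\leq \Obs_Y^{\varepsilon}(X_n;-\kappa)+2\delta_n.\notag
\end{align}
Now take $\liminf_n$. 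The right-continuity of $\kappa\mapsto\Obs_Y(X;-\kappa)$ (the proposition quoted from \cite{OSh}) gives the left side, and letting $\varepsilon\to0$ yields the first claim. The $\varepsilon$-limit exists by monotonicity in $\varepsilon$.

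For the second inequality, we need maps in the reverse direction. Since $X_n,X$ are compact, we construct $\varepsilon_n'$-approximation maps $g_n:X\to X_n$ with $\varepsilon_n'\to0$ (for instance $3\varepsilon_n$-type constants). To do this, partition $X$ into finitely many Borel pieces of small diameter, and send each piece to a point $z\in X_n$ whose image $f_n(z)$ is close to a chosen point of the piece. Such a point exists because $f_n(X_n)$ is $\varepsilon_n$-dense. This makes $g_n$ Borel, and $g_n$ is a near-inverse of $f_n$, so $d_{X_n}(g_n\circ f_n(z),z)$ is uniformly small. We also need $d_P((g_n)_{\#}\mu_X,\mu_{X_n})\to0$. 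I expect this to be the main obstacle, since mGH convergence is stated only through $f_n$. I would use the Ky Fan bound of Lemma \ref{lem:KFP}, applied to $g_n\circ f_n$ and $\mathrm{id}$ on $X_n$, to get $d_P((g_n\circ f_n)_{\#}\mu_{X_n},\mu_{X_n})\to0$. Then $g_n$ is uniformly continuous up to an additive error, which comes from the distortion bound, so the estimate $d_P((f_n)_{\#}\mu_{X_n},\mu_X)\to0$ transfers through $g_n$ with an additive error. With this in hand, for $H\in\Lip_1(X_n,Y)$ the map $H\circ g_n$ lies in $\Lip_1^{\varepsilon'_n}(X,Y)$. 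The same computation as above, with the roles of $X$ and $X_n$ exchanged, gives
\begin{align}
 \Obs_Y(X_n;-(\kappa+\varepsilon))\leq\Obs_Y^{\varepsilon_n'}(X;-\kappa)+2\delta_n'\notag
\end{align}
for $n$ large, where $\delta_n'\to0$ collects the Prokhorov and distortion errors. Taking $\limsup_n$ and using $\Obs^{\varepsilon'_n}_Y\downarrow\Obs_Y^+$, then letting $\varepsilon\to0$, completes the proof.
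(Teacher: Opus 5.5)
Your proposal is correct and follows the paper's route. The paper proves this corollary only by calling it ``similar'' to Proposition~\ref{prop:arbbox}: 1-Lipschitz maps are transferred through approximation maps $f_n:X_n\to X$ for the first inequality and through reverse maps $g_n:X\to X_n$ for the second, exactly as you do. Your explicit construction of Borel reverse maps $g_n$ with $d_P((g_n)_{\#}\mu_X,\mu_{X_n})\to0$, via $g_n\circ f_n\approx\mathrm{id}$, Lemma~\ref{lem:KFP}, and the bound $d_P((g_n)_{\#}\nu_1,(g_n)_{\#}\nu_2)\le d_P(\nu_1,\nu_2)+\varepsilon_n'$, fills in a step the paper leaves implicit, since mGH convergence is defined only through $f_n$.
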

\subsection{Observable diameter and varying screens}
Let $(Y,o)$ be a pointed metric space. For any $R>0$, we define 
\begin{align}
 \Lip_1(X,Y;R):=\left\{f\in \Lip_1(X,Y)\;;\;\sup_{x\in X}d_Y(o,f(x))<R\right\}, \notag
\end{align}
and 
\begin{align}
 \Obs_Y(X;-\kappa;R):=\sup\left\{\di(F_{\#}\mu_X;1-\kappa)\;;\;F\in\Lip_1(X,Y;R)\right\}.\notag
\end{align}
Since $\Lip_1(X,Y)=\cup_{R>0}\Lip_1(X,Y;R)$ for an mm space $X$ of finite diameter, we have $\lim_{R\rightarrow\infty}\Obs_Y(X;-\kappa;R)=\Obs_Y(X;-\kappa)$.
 \begin{thm}\label{thm:pGHlsc}
  Let $X$ be an mm space of finite diameter and $(Y_n,o_n)$, $(Y,o)$ proper pointed metric spaces such that $Y_n\xrightarrow{pGH}Y$. Then 
  \begin{align}
   \Obs_Y(X;-\kappa)\leq \lim_{\varepsilon\rightarrow+0}\liminf_{n\rightarrow\infty}\Obs_{Y_n}^{\varepsilon}(X;-\kappa)\notag
  \end{align} 
  holds. 
 \end{thm}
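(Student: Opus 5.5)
Fix $\varepsilon>0$ and reduce to bounded maps: since $X$ has finite diameter, $\Obs_Y(X;-\kappa)=\lim_{R\to\infty}\Obs_Y(X;-\kappa;R)$. It therefore suffices to show, for each fixed $R$ and each $F\in\Lip_1(X,Y;R)$, that
\begin{align}
 \di(F_{\#}\mu_X;1-\kappa)\leq \liminf_{n\rightarrow\infty}\Obs_{Y_n}^{\varepsilon}(X;-\kappa)+\varepsilon.\notag
\end{align}
Once this holds, take the supremum over $F$, then let $R\to\infty$, then $\varepsilon\to0$. The monotonicity of $\delta\mapsto\Obs^{\delta}_{Y_n}$ guarantees that the outer limit exists.

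Let $R_n\uparrow\infty$, $\varepsilon_n\downarrow0$ and $f_n:B_{R_n}(o)\to Y_n$ be the maps from the definition of $(Y_n,o_n)\xrightarrow{pGH}(Y,o)$. For $n$ large we have $R_n>R$ and $\varepsilon_n<\varepsilon$. Because $F(X)\subset U_R(o)\subset B_{R_n}(o)$, the composition $G_n:=f_n\circ F:X\to Y_n$ is defined everywhere and Borel. For all $x_1,x_2\in X$,
\begin{align}
 d_{Y_n}(G_n(x_1),G_n(x_2))< d_Y(F(x_1),F(x_2))+\varepsilon_n\leq d_X(x_1,x_2)+\varepsilon .\notag
\end{align}
Hence $G_n\in\Lip_1^{\varepsilon}(X,Y_n)$. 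This is exactly why the statement uses $\Obs^{\varepsilon}$ with the error holding on all of $X$, rather than the almost-Lipschitz version: condition (1) of the pGH definition holds for every pair of points in $B_{R_n}(o)$.

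The key step is to compare partial diameters by pulling sets back from $Y_n$ to $Y$. Let $A\subset Y_n$ be Borel with $(G_n)_{\#}\mu_X(A)\geq1-\kappa$. Put $A':=F(X)\cap f_n^{-1}(A)$, a Borel set (or replace it by $f_n^{-1}(A)\cap B_R(o)$). Then $F^{-1}(A')=G_n^{-1}(A)$, so $F_{\#}\mu_X(A')\geq1-\kappa$. For $z_1,z_2\in A'$ we get $d_Y(z_1,z_2)<d_{Y_n}(f_n(z_1),f_n(z_2))+\varepsilon_n\leq\di(A)+\varepsilon_n$, so $\di(A')\leq\di(A)+\varepsilon_n$. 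Taking the infimum over $A$ gives
\begin{align}
 \di(F_{\#}\mu_X;1-\kappa)\leq\di((G_n)_{\#}\mu_X;1-\kappa)+\varepsilon_n\leq \Obs_{Y_n}^{\varepsilon}(X;-\kappa)+\varepsilon_n .\notag
\end{align}
Taking $\liminf_n$ yields the claim. Note that $\kappa$ is not shifted here, unlike in Proposition~\ref{prop:arbbox}, so right-continuity is not needed.

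The main obstacle is mild measurability bookkeeping: checking that $f_n^{-1}(A)$ is Borel, which holds because $f_n$ is Borel measurable, and that $F$ stays inside the domain $B_{R_n}(o)$ of $f_n$. The latter is precisely what the truncation $\Lip_1(X,Y;R)$ and the finite diameter of $X$ provide. Properness of $Y_n$ and $Y$, and the surjectivity condition (2), are not needed for this inequality.
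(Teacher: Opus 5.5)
Your proof is correct and follows the paper's argument: you compose a bounded $F\in\Lip_1(X,Y;R)$ with the pGH maps $f_n$ to get $G_n\in\Lip_1^{\varepsilon_n}(X,Y_n)$, and you compare partial diameters by pulling sets back through $f_n^{-1}$ with the same $\kappa$. Your bound $\di(f_n^{-1}(A))\leq\di(A)+\varepsilon_n$ is in fact sharper than the paper's $4\varepsilon_n$, and of your two choices of $A'$ you should use $f_n^{-1}(A)\cap B_R(o)$ (or simply $f_n^{-1}(A)$), since $F(X)$ need not be Borel in general.
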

 \begin{proof}  
  Take any positive number $\varepsilon>0$ and fix it. Then there exists a map $F\in\Lip_1(X,Y;R)$ such that 
  \begin{align}
   \di(F_{\#}\mu_X;1-\kappa)\geq \Obs_Y(X;-\kappa)-\varepsilon.\notag
  \end{align}
  By the assumption, there exist sequences of numbers $R_n\uparrow+\infty$, $\varepsilon_n\downarrow0$, of maps $f_n:B^Y_{R_n}(o)\rightarrow Y_n$ such that 
  \begin{align}
   \left\{\begin{aligned}
    &\lv d_Y(y_1,y_2)-d_{Y_n}(f_n(y_1),f_n(y_2))\rv<\varepsilon_n\notag\\
    &U_{\varepsilon_n}(f_n(B^Y_{R_n}(o)))\supset B^{Y_n}_{R_n-\varepsilon_n}(o_n)
   \end{aligned}\right.
  \end{align}
  Define the map $G_n:=f_n\circ F$. Then $G_n\in \Lip_1^{\varepsilon_n}(X,Y_n)$ for sufficiently large $n$. In the following we assume $R_n>R$ and $\varepsilon_n<\varepsilon$. 
  \par Take any Borel subset $A\in \calB(Y_n)$ with $(G_n)_{\#}\mu_X(A)\geq 1-\kappa$. Then by the definition, we have 
  \begin{align}
   F_{\#}\mu_X(f_n^{-1}(A))=(G_n){\#}\mu_X(A)\geq 1-\kappa.\notag
  \end{align}
  Since $\di (f_n^{-1}(A))\leq \di (A)+4\varepsilon_n$, by a similar argument in Proposition \ref{prop:arbbox}, it holds 
  \begin{align}
   \Obs_Y(X;-\kappa)-\varepsilon&\leq \di(F_{\#}\mu_X;1-\kappa)\leq \Obs_{Y_n}^{\varepsilon_n}(X;-\kappa)+4\varepsilon_n\notag\\
   &\leq \Obs_{Y_n}^{\varepsilon}(X;-\kappa)+4\varepsilon.\label{eq:pGHlsc}
  \end{align}
  Hence we have 
  \begin{align}
   \Obs_Y(X;-\kappa)\leq \liminf_{n\rightarrow \infty}\Obs_{Y_n}^{\varepsilon}(X;-\kappa)+5\varepsilon\notag
  \end{align}
  for arbitrary $\varepsilon>0$. Thus we have the conclusion. 
 \end{proof}
 \begin{rem}
  The inequality in Theorem \ref{thm:pGHlsc} cannot be changed into the equality in general. Let us consider the following example: Define an mm space $X$ by 
  \begin{align}
   &X=[0,1]\cup\{\infty\},\notag\\
   &d_X(x,\infty)=1\text{ for any }x\in[0,1] \text{ and } d_X(x,y)=\lv x-y\rv \text{ for any $x,y\in[0,1]$},\notag\\
   &\mu_X=\frac{1}{2}\delta_{\infty}+\frac{1}{2}\calL^1\vert_{[0,1]}.\notag
  \end{align} 
  And a set $Y$ is defined by $Y:=\R_{\leq 0}\cup X_{/\sim}$ where $x\sim y$ if either $x=0\in\R_{\leq0}$ and $y=0\in X$ or $x=0\in X$ and $y=0\in\R_{\leq 0}$. We define a distance function on $Y$ by 
  \begin{align}
   d_Y(x,y):=\left\{\begin{aligned}
    &\lv x-y\rv\quad\text{if }x,y\in\R_{\leq 0},\\
    &d_X(x,y)\quad\text{if }x,y\in X,\\
    &\lv x\rv+\lv y\rv\quad \text{either }x\in\R_{\leq 0},\;y\in X\setminus\{\infty\}, \text{or }x\in X\setminus \{\infty\},\;y\in\R_{\leq 0},\\
    &\lv x\rv+1\quad\text{if }x\in\R_{\leq0},\;y=\infty,\\
    &\lv y\rv+1\quad\text{if }x=\infty,\;y\in\R_{\leq0}.\notag
   \end{aligned}\right.
  \end{align}
  Take a positive number $0<\kappa<1/2$. Then since $X$ can be isometric embedded into $Y$, it holds 
  \begin{align}
  \Obs_Y(X;-\kappa)\geq \di(\mu_X;1-\kappa)\geq 1.\notag
  \end{align} 
  On the other hand, take a point $-1\in\R_{\leq0}\subset Y$, and consider the partially scaled pointed metric spaces $(Y_n,-1):=(Y,d_{Y_n},-1)$, where 
  \begin{align}
   d_{Y_n}(x,y):=\left\{\begin{aligned}
    &n\lv x-y\rv\quad\text{if }x,y\in\R_{\leq 0},\\
    &d_X(x,y)\quad\text{if }x,y\in X,\\
    &n\lv x\rv+\lv y\rv\quad \text{if }x\in\R_{\leq 0},\;y\in X\setminus\{\infty\},\\
    &\lv x\rv+n\lv y\rv\quad \text{if }x\in X\setminus\{\infty\},\;y\in\R_{\leq 0},\\
    &n\lv x\rv+1\quad\text{if }x\in\R_{\leq0},\;y=\infty,\\
    &n\lv y\rv+1\quad\text{if }x=\infty,\;y\in\R_{\leq0}.\notag
   \end{aligned}\right.
  \end{align}
  Then by an elementary argument, we obtain $(Y_n,-1)\xrightarrow{pGH}(\R,0)$. Since any Lipschitz map $f:X\rightarrow \R$ has to maps a point $\{\infty\}$ onto $f(X\setminus \{\infty\})$, it holds $\Obs_{\R}(X;-\kappa)=2^{-1}(1/2-\kappa)$. However $\Obs_{Y_n}(X;-\kappa)\geq 1$. Thus the equality never hold since $\Obs_Y\leq \Obs_Y^{\varepsilon}$ for any $\varepsilon>0$.   
 \end{rem}
For any sequence of compact mm spaces satisfying $Y_n\xrightarrow{GH}Y$, the supremum of diameters of $Y_n$ is finite. Hence the same argument in Theorem \ref{thm:pGHlsc} leads the following corollary. 
 \begin{cor}\label{cor:GHlscusc}
  Let $Y_n, Y$ be compact metric space such that $Y_n\xrightarrow{GH}Y$. Then 
  \begin{align}
   &\Obs_Y(X;-\kappa)\leq \lim_{\varepsilon\rightarrow+0}\liminf_{n\rightarrow\infty}\Obs_{Y_n}^{\varepsilon}(X;-\kappa)\notag\\
   &\Obs_Y^+(X;-\kappa)\geq \limsup_{n\rightarrow\infty}\Obs_{Y_n}(X;-\kappa).\notag
  \end{align} 
 \end{cor}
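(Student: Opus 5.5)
The first inequality is already contained in Theorem \ref{thm:pGHlsc}. For compact $Y_n\xrightarrow{GH}Y$ the diameters of the $Y_n$ are uniformly bounded, so the truncation parameter $R$ plays no role. I therefore take $\varepsilon_n$-approximation maps $f_n\colon Y\rightarrow Y_n$ defined on all of $Y$ and repeat that argument verbatim. This yields the stronger pointwise bound
\begin{align}
 \Obs_Y(X;-\kappa)\leq \Obs_{Y_n}^{\varepsilon}(X;-\kappa)+5\varepsilon\notag
\end{align}
for $n$ large, and hence the first claim after letting $n\to\infty$ and then $\varepsilon\to0$.

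The work is in the second inequality, where the roles are reversed. I will take $\varepsilon_n$-approximation maps $h_n\colon Y_n\rightarrow Y$ with $\varepsilon_n\downarrow 0$, which exist by the proposition relating $d_{GH}$ to approximation maps. Fix $\eta>0$. For each $n$ choose $F_n\in\Lip_1(X,Y_n)$ with
\begin{align}
 \di((F_n)_{\#}\mu_X;1-\kappa)\geq \Obs_{Y_n}(X;-\kappa)-\eta.\notag
\end{align}
Then $G_n:=h_n\circ F_n$ satisfies
\begin{align}
 d_Y(G_n(x_1),G_n(x_2))< d_{Y_n}(F_n(x_1),F_n(x_2))+\varepsilon_n\leq d_X(x_1,x_2)+\varepsilon_n,\notag
\end{align}
so $G_n\in\Lip_1^{\varepsilon_n}(X,Y)$. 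It is Borel as a composition of a Borel map with a continuous one.

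The key comparison is between partial diameters under $h_n$. Let $A\subset Y$ be Borel with $(G_n)_{\#}\mu_X(A)\geq 1-\kappa$. Then $B:=h_n^{-1}(A)$ satisfies $(F_n)_{\#}\mu_X(B)\geq 1-\kappa$. Moreover $h_n$ distorts distances by less than $\varepsilon_n$, so $\di B\leq \di A+\varepsilon_n$. Taking the infimum over $A$ gives
\begin{align}
 \di((F_n)_{\#}\mu_X;1-\kappa)\leq \di((G_n)_{\#}\mu_X;1-\kappa)+\varepsilon_n.\notag
\end{align}
Combining this with the choice of $F_n$ and with monotonicity in $\delta$ yields, for $n$ so large that $\varepsilon_n<\delta$,
\begin{align}
 \Obs_{Y_n}(X;-\kappa)-\eta\leq \Obs_Y^{\delta}(X;-\kappa)+\varepsilon_n.\notag
\end{align}
Taking $\limsup_n$, then letting $\eta\to0$ and finally $\delta\downarrow0$, gives $\limsup_n\Obs_{Y_n}(X;-\kappa)\leq \Obs_Y^+(X;-\kappa)$.

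The step needing the most care is the measurability and the preimage bookkeeping: the set $h_n^{-1}(A)$ must be Borel in $Y_n$, which holds because approximation maps are assumed Borel measurable. One might also worry that the $\varepsilon$-net condition (2) of an approximation map is needed, but it is not. This direction uses only the distortion bound (1), which is why no loss in $\kappa$ appears, unlike in Proposition \ref{prop:arbbox}.
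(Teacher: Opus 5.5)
Your proposal is correct and follows the paper's route. The paper only remarks that for compact spaces the pull-back argument of Theorem \ref{thm:pGHlsc} carries over with globally defined approximation maps, and your second inequality is exactly that argument with the roles of $Y$ and $Y_n$ swapped via $h_n\colon Y_n\to Y$. Redoing the first inequality with maps $f_n\colon Y\to Y_n$ defined on all of $Y$, rather than citing Theorem \ref{thm:pGHlsc} directly, is also the right choice, because it removes that theorem's finite-diameter assumption on $X$.
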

%
%
\section{Lipschitz-approximated family and observable diameter}
\begin{defn}\label{def:lipapprox}
 For a given mm space $X$, a family of metric spaces $\frakY$ is called a \emph{Lipschitz-approximated family to $X$} if it holds 
 \begin{align}
  \lim_{\delta\rightarrow +0}\sup_{Y\in\frakY}H\!d_{KF}^{\mu_X}(\Lip_1(X,Y),\wtilde{\Lip_1^{\delta}}(X,Y))=0.\label{eq:lipapprox}
 \end{align}
  When a family of metric spaces $\frakY$ consists of only one metric space $Y$, then $Y$ is said to be Lipschitz-approximated to $X$. 
\end{defn}
 \begin{rem}
  It is obvious that the condition (\ref{eq:lipapprox}) is not always held. For example, consider the two points mm space defined as $X=\{x_0,x_1\}$, $d_X(x_0,x_1)=1$, $\mu_X=2^{-1}\delta_{x_0}+2^{-1}\delta_{x_1}$ and the countable metric space $Y=\{y_i\}_{i=0}^{\infty}$ with the metric $d_Y$ on $Y$ defined by $d_Y(y_i,y_i)=0$, ($i=0,1,2,\cdots$), 
  \begin{align}
   d_Y(y_0,y_i)=1+i^{-1}\;(i\neq 0), \;d_Y(y_j,y_k)=2+j^{-1}+k^{-1}\;(j,k\neq 0,\,j\neq k).\notag
  \end{align} 
  Then $\Lip_1(X,Y)=\{\text{constant maps}\}$ while a map $f:X\rightarrow Y$ defined by 
  \begin{align}
    f(x_0):=y_0,\;f(x_1):=y_i\notag
  \end{align}
  belongs to $\wtilde{\Lip_1^{\delta}}(X,Y)$ for $\delta>i^{-1}$. Then $H\!d_{KF}^{\mu_X}(\Lip_1(X,Y),\wtilde{\Lip_1^{\delta}}(X,Y))\geq 2^{-1}$ for any $\delta>0$. 
 \end{rem}
 \begin{thm}\label{thm:coin}
  Let $X$ be an mm space. Assume a metric space $Y$ is Lipschitz-approximated to $X$. Then for $\calL^1$-a.e. $\kappa\in(0,1)$, it holds 
  \begin{align}
   \wtilde{\Obs_Y^+}(X;-\kappa)=\Obs_Y(X;-\kappa).\notag
  \end{align} 
 \end{thm}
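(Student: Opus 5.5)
The inequality $\Obs_Y(X;-\kappa)\leq \wtilde{\Obs_Y^+}(X;-\kappa)$ is immediate, since $\Lip_1\subset\wtilde{\Lip_1^{\delta}}$ for every $\delta>0$. All the work is in the reverse inequality. The plan is to transfer partial diameters along Ky Fan–close maps, losing a small shift in $\kappa$, and then to remove that shift using monotonicity in $\kappa$.

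Set $\eta(\delta):=H\!d_{KF}^{\mu_X}(\Lip_1(X,Y),\wtilde{\Lip_1^{\delta}}(X,Y))$. By assumption $\eta(\delta)\to0$ as $\delta\to+0$. Fix $\delta$ and an arbitrary $g\in\wtilde{\Lip_1^{\delta}}(X,Y)$. By the definition of the Hausdorff distance, for any $\eta'>\eta(\delta)$ there is $F\in\Lip_1(X,Y)$ with $d_{KF}^{\mu_X}(F,g)<\eta'$. Lemma \ref{lem:KFP} then gives $d_P(F_{\#}\mu_X,g_{\#}\mu_X)<\eta'$.

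Next I would prove the elementary transfer estimate, by the same computation as in Proposition \ref{prop:arbbox}:
\begin{align}
 \di(g_{\#}\mu_X;1-(\kappa+\eta'))\leq \di(F_{\#}\mu_X;1-\kappa)+2\eta'.\notag
\end{align}
To see this, take $A$ with $F_{\#}\mu_X(A)\geq1-\kappa$. The Prokhorov bound gives $g_{\#}\mu_X(U_{\eta'}(A))\geq 1-\kappa-\eta'$, and $\di U_{\eta'}(A)\leq\di A+2\eta'$. Since $\di(F_{\#}\mu_X;1-\kappa)\leq\Obs_Y(X;-\kappa)$, taking the supremum over $g$ yields
\begin{align}
 \wtilde{\Obs_Y^{\delta}}(X;-(\kappa+\eta'))\leq \Obs_Y(X;-\kappa)+2\eta'.\notag
\end{align}
Letting $\delta\to0$, with $\eta'$ chosen tending to $0$ alongside $\eta(\delta)$, and using that $\wtilde{\Obs^\delta_Y}$ is nondecreasing in $\delta$ and nonincreasing in $\kappa$, I obtain for every $s>0$
\begin{align}
 \wtilde{\Obs_Y^+}(X;-(\kappa+s))\leq \Obs_Y(X;-\kappa).\notag
\end{align}
To justify this, fix $s$. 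For all small $\delta$ we have $\eta'<s$, so $\wtilde{\Obs_Y^{\delta}}(X;-(\kappa+s))\leq \wtilde{\Obs_Y^{\delta}}(X;-(\kappa+\eta'))\leq\Obs_Y(X;-\kappa)+2\eta'$. Now let $\delta\to0$.

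The main obstacle is removing the shift $s$, and this is where "a.e. $\kappa$" enters. The function $\phi(\kappa):=\Obs_Y(X;-\kappa)$ is nonincreasing in $\kappa$, hence has at most countably many discontinuities; it is also right-continuous. Similarly $\psi(\kappa):=\wtilde{\Obs_Y^+}(X;-\kappa)$ is nonincreasing. From the display, $\psi(\kappa')\leq\phi(\kappa'-s)$ for all $s\in(0,\kappa')$, so letting $s\downarrow0$ gives $\psi(\kappa')\leq\phi(\kappa'-)$, the left limit. At every continuity point $\kappa'$ of $\phi$ we have $\phi(\kappa'-)=\phi(\kappa')$. Combined with the trivial inequality, this yields equality off a countable, hence $\calL^1$-null, set.

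One caveat must be checked: these quantities could be $+\infty$ if $Y$ is unbounded, because the monotonicity argument needs finiteness. If $\phi\equiv\infty$ on an interval, then equality holds trivially there, since $\psi\geq\phi$. In general, monotone extended-valued functions still have countably many discontinuities in the extended sense, so the argument goes through.
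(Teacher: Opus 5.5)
Your proof is correct and follows essentially the same route as the paper. You approximate each almost 1-Lipschitz map by a Ky Fan--close 1-Lipschitz map, transfer partial diameters via Lemma \ref{lem:KFP} at the cost of a shift in $\kappa$ and an additive $2\varepsilon$, and then remove the shift at the all-but-countably-many continuity points of the monotone function $\kappa\mapsto\Obs_Y(X;-\kappa)$. The only differences are cosmetic: you put the shift on the almost-Lipschitz side ($\kappa+\eta'$) rather than on the Lipschitz side ($\kappa-\varepsilon$), which leads to the same left-limit argument, and you add a remark on possibly infinite values that the paper does not need to make.
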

 \begin{proof}
  Let $\varepsilon>0$ be an arbitrary positive number less than $\kappa$. By the assumption, 
   \begin{align}
    H\!d_{KF}^{\mu_X}(\Lip_1(X,Y),\wtilde{\Lip_1^{\delta}}(X,Y))<\varepsilon\notag
   \end{align} 
   for small $\delta>0$. Take any $f\in \wtilde{\Lip_1^{\delta}}(X,Y)$. Then we are able to find $g\in \Lip_1(X,Y)$ such that $d_{KF}^{\mu_X}(f,g)<\varepsilon$. Thus by Lemma \ref{lem:KFP}, we have $d_P(f_{\#}\mu_X,g_{\#}\mu_X)<\varepsilon$. 
   \par Take any Borel set $A\in\calB(Y)$ with $g_{\#}\mu_X(A)\geq 1-(\kappa-\varepsilon)$. Then 
   \begin{align}
    f_{\#}\mu_X(U_{\varepsilon}(A))\geq g_{\#}\mu_X(A)-\varepsilon\geq 1-\kappa.\notag
   \end{align}
   Since $\di(U_{\varepsilon}(A))\leq \di(A)+2\varepsilon$, by a similar argument in Proposition \ref{prop:arbbox}, we obtain 
   \begin{align}
    &\wtilde{\Obs_Y^+}(X;-\kappa)\notag\\
    &\leq \wtilde{\Obs_Y^{\delta}}(X;-\kappa)\leq \Obs_Y(X;-(\kappa-\varepsilon))+2\varepsilon.\label{eq:compdelta}
   \end{align}
   Since $\kappa\mapsto\Obs_Y(X;-\kappa)$ is right-continuous and monotone non-increasing, the function is continuous except at most countably many numbers. Thus taking the limit $\varepsilon\rightarrow+0$, it holds 
   \begin{align}
    \wtilde{\Obs_Y^+}(X;-\kappa)\leq \Obs_Y(X;-\kappa)\notag
   \end{align}
   at any continuous point $\kappa$. It is trivial the converse inequality is true. 
 \end{proof}
We are able to rewrite Theorem \ref{thm:pGHlsc} and Corollary \ref{cor:GHlscusc} into simpler statements. 
 \begin{thm}\label{thm:main}
  Let $X$ be an mm space and $\frakY$ a family of metric spaces being a Lipschitz-approximated family to $X$. 
  \begin{enumerate}
   \item Assume compact metric spaces $Y_n$, $Y\in\frakY$ satisfies $Y_n\xrightarrow{GH}Y$. Then it holds 
   \begin{align}
    \Obs_Y(X;-\kappa)=\lim_{n\rightarrow\infty}\Obs_{Y_n}(X;-\kappa)\notag
   \end{align} 
   for $\calL^1$-a.e. $\kappa\in(0,1)$. 
   \item Assume noncompact metric spaces $Y_n,Y\in\frakY$ satisfying $(Y_n,o_n)\xrightarrow{pGH}(Y,o)$ for some points $o_n\in Y_n$, $o\in Y$. Then it holds 
   \begin{align}
    \Obs_Y(X;-\kappa)\leq \liminf_{n\rightarrow\infty}\Obs_{Y_n}(X;-\kappa)\notag
   \end{align}  
   for $\calL^1$-a.e. $\kappa\in(0,1)$. 
  \end{enumerate} 
 \end{thm}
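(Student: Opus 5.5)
The plan is to combine Corollary \ref{cor:GHlscusc} (resp.\ Theorem \ref{thm:pGHlsc}) with a \emph{uniform} version of the argument of Theorem \ref{thm:coin}. The point of assuming a Lipschitz-approximated \emph{family} rather than a single space is that the comparison between error-$\delta$ observable diameters and true ones can be made uniformly in $Y\in\frakY$, so that it commutes with $n\to\infty$.

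\textbf{Step 1 (uniform comparison).} Fix $\varepsilon>0$ with $\varepsilon<\kappa$. By (\ref{eq:lipapprox}) there is $\delta_0>0$ such that for every $\delta<\delta_0$ and every $Z\in\frakY$, each $f\in\wtilde{\Lip_1^{\delta}}(X,Z)$ has some $g\in\Lip_1(X,Z)$ with $d_{KF}^{\mu_X}(f,g)<\varepsilon$. Repeating the proof of Theorem \ref{thm:coin} verbatim (Lemma \ref{lem:KFP}, then enlarging sets by $U_\varepsilon$) gives (\ref{eq:compdelta}) uniformly:
\begin{align}
 \Obs_{Z}^{\delta}(X;-\kappa)\leq\wtilde{\Obs_{Z}^{\delta}}(X;-\kappa)\leq \Obs_{Z}(X;-(\kappa-\varepsilon))+2\varepsilon\quad\text{for all }Z\in\frakY,\ \delta<\delta_0.\notag
\end{align}
The first inequality uses $\Lip_1^\delta\subset\wtilde{\Lip_1^\delta}$.

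\textbf{Step 2 (lower bound, both parts).} Apply this with $Z=Y_n$ in Theorem \ref{thm:pGHlsc}, or in the first inequality of Corollary \ref{cor:GHlscusc} in the compact case. For $\delta<\delta_0$ this gives
\begin{align}
 \Obs_Y(X;-\kappa)\leq \liminf_{n}\Obs_{Y_n}^{\delta}(X;-\kappa)\leq \liminf_n\Obs_{Y_n}(X;-(\kappa-\varepsilon))+2\varepsilon.\notag
\end{align}
Here I need that $\delta\mapsto\liminf_n\Obs^\delta_{Y_n}$ is monotone, so the limit $\delta\to0$ is dominated by any small fixed $\delta$. Replacing $\kappa$ by $\kappa+\varepsilon$ yields $\Obs_Y(X;-(\kappa+\varepsilon))\leq\liminf_n\Obs_{Y_n}(X;-\kappa)+2\varepsilon$. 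Letting $\varepsilon\to0$ and using right-continuity of $\kappa\mapsto\Obs_Y(X;-\kappa)$ gives part (2) for every $\kappa$, and the lower half of part (1).

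For Theorem \ref{thm:pGHlsc} I should check that $X$ has finite diameter, or else note that the proof only needs $F$ with bounded image, obtained by approximation. This is a side issue.

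\textbf{Step 3 (upper bound in part (1)).} By the second inequality of Corollary \ref{cor:GHlscusc} and Theorem \ref{thm:coin} applied to $Y\in\frakY$,
\begin{align}
 \limsup_n\Obs_{Y_n}(X;-\kappa)\leq \Obs_Y^+(X;-\kappa)\leq\wtilde{\Obs_Y^+}(X;-\kappa)=\Obs_Y(X;-\kappa)\notag
\end{align}
at every continuity point $\kappa$ of $\Obs_Y(X;-\cdot)$. Since this function is monotone, all but countably many $\kappa$ are continuity points, so the bound holds $\calL^1$-a.e.

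\textbf{Main obstacle.} The delicate point is Step 2. There the error $\delta$ must be removed uniformly along the sequence $Y_n$, and this is exactly where uniformity over $\frakY$ in Definition \ref{def:lipapprox} is essential. Separately, the shift from $\kappa$ to $\kappa\pm\varepsilon$ has to be undone using only continuity properties of the \emph{limit} function $\Obs_Y$, never those of $\Obs_{Y_n}$. This is why the a.e.\ restriction appears.
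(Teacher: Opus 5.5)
Your proposal is correct and follows the paper's proof. The upper bound in (1) comes from the second inequality of Corollary \ref{cor:GHlscusc} together with Theorem \ref{thm:coin}, and the lower bound comes from (\ref{eq:pGHlsc}) combined with a version of (\ref{eq:compdelta}) made uniform over $\frakY$ by (\ref{eq:lipapprox}). Your Step 2 is slightly more economical than the paper's, which also restricts $\kappa$ and $\kappa+\varepsilon_j$ to continuity points of every $\Obs_{Y_n}(X;-\cdot)$: since the shift there is $\kappa+\varepsilon\downarrow\kappa$, right-continuity of $\Obs_Y(X;-\cdot)$ suffices, so part (2) holds for every $\kappa$ (modulo the finite-diameter hypothesis of Theorem \ref{thm:pGHlsc}, which the paper's proof also uses tacitly).
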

 \begin{proof}
  Both proofs are similar. Let us start proving (1). By the assumption, we have $\eta_n$-approximation maps $Y\rightarrow Y_n$ with $\eta_n\rightarrow+0$. By Theorem \ref{thm:coin}, for $\calL^1$-a.e. $\kappa\in (0,1)$ it holds 
  \begin{align}
   &\Obs_{Y}^{+}(X;-\kappa)=\Obs_{Y}(X;-\kappa),\notag\\
   &\Obs_{Y_n}^{+}(X;-\kappa)=\Obs_{Y_n}(X;-\kappa)\notag   
  \end{align}
  for any $n\in\bbN$. Take such a $\kappa\in(0,1)$ and fix it. Moreover we may assume the continuity of functions $\Obs_Y(X;-(\cdot))$, $\Obs_{Y_n}(X;-(\cdot))$ at $\kappa$. Then by Corollary \ref{cor:GHlscusc}, we have 
  \begin{align}
   \limsup_{n\rightarrow\infty}\Obs_{Y_n}(X;-\kappa)\leq \Obs_Y(X;-\kappa).\notag
  \end{align}
  Take a decreasing sequence $\varepsilon_j\downarrow 0$ so that $\kappa\mapsto \Obs_{Y_n}(X;-\kappa)$, $\Obs_Y(X;-\kappa)$ are continuous at $\kappa+\varepsilon_j$ for any $j$. By (\ref{eq:lipapprox}) and (\ref{eq:compdelta}), for such $\varepsilon_j$, there exists a positive constant $\delta_j$ such that it holds 
  \begin{align}
   \Obs_{Y_n}^{\delta_j}(X;-(\kappa+\varepsilon_j))\leq \Obs_{Y_n}(X;-\kappa)+2\varepsilon_j,\notag
  \end{align}
 for any $Y_n$. Take a sufficiently large $n$ so that $\eta_n<\delta_j$. Then we have 
 \begin{align}
  \Obs_Y(X;-(\kappa+\varepsilon_j))&\leq \Obs_{Y_n}^{\delta_j}(X;-(\kappa+\varepsilon_j))+5\delta_j\notag\\
  &\leq \Obs_{Y_n}(X;-\kappa)+2\varepsilon_j+5\delta_j, \notag
 \end{align} 
  where the first inequality comes from (\ref{eq:pGHlsc}). Therefore letting $n\rightarrow\infty$ leads 
  \begin{align}
   \Obs_Y(X;-(\kappa+\varepsilon_j))\leq \liminf_{n\rightarrow\infty}\Obs_{Y_n}(X;-\kappa)+2\varepsilon_j+5\delta_j.\notag
  \end{align}
  Since $j$ is arbitrary in the above inequality and $\Obs_Y(X;-\kappa)$ is continuous at $\kappa$, we obtain 
  \begin{align}
   \Obs_Y(X;-\kappa)\leq \liminf_{n\rightarrow\infty}\Obs_{Y_n}(X;-\kappa).\notag
  \end{align}
  Combining two inequalities, we have the conclusion. 
 \end{proof}
\subsection{Doubling spaces}
 In this subsection, we first show that $\wtilde{\Lip_1^{\delta}}$ and $\Lip_1^{\delta}$ are comparable for doubling spaces. 
  \begin{defn}[doubling space]
   Let $(X,d_X,\mu_X)$ be a metric space with a Borel measure $\mu_X$. $X$ is said to satisfy the \emph{doubling property} if for any $x\in X$, and any $r>0$, it holds 
   \begin{align}
    \mu_X(B_{2r}(x))\leq 2^N\mu_X(B_r(x)).\notag
   \end{align} 
   We call $2^N$ a doubling constant. 
  \end{defn}
  \begin{rem}
   Let $(X,d_X,\mu_X)$ be a metric space with a Borel measure $\mu_X$. Assume $X$ satisfies the doubling property with the doubling constant $2^N$. Then for any $x\in X$ and $0<r\leq R$, we have 
   \begin{align}
    \mu_X(B_R(x))\leq \left(\frac{2R}{r}\right)^N\mu_X(B_r(x))\notag
   \end{align} 
   by a simple argument. See for instance the proof of Theorem 5.2.2 in \cite{AT}. 
  \end{rem}
 The following Lemmata are standard. For instance, see \cite{BBI}*{p278} and \cite{Shioya}*{Lemma 3.4}, respectively for the proof. 
  \begin{lem}
   Let $X$ be a compact mm space and $\eta>0$ a positive number. Then there exists a finite maximal $\eta$-separated set $\calN$ in $X$, that is, $d_X(x,y)\geq \eta$ for any $x,y\in\calN$ and $\calN$ is the maximal, by inclusion, subset satisfying such a property. Moreover $B_{\eta}(\calN)=X$ holds. 
  \end{lem}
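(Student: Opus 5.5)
We build $\calN$ greedily, using compactness to guarantee that the process stops after finitely many steps. The order is: separation first, then maximality, then finiteness, and finally the covering property.

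Choose $x_1\in X$. Having chosen $x_1,\dots,x_k$ with $d_X(x_i,x_j)\geq\eta$ for $i\neq j$, look at the set
\begin{align}
 X\setminus\bigcup_{i=1}^k U_{\eta}(x_i).\notag
\end{align}
If it is nonempty, pick $x_{k+1}$ in it; then $d_X(x_{k+1},x_i)\geq\eta$ for every $i\leq k$, so the enlarged set is still $\eta$-separated. If it is empty, stop and put $\calN:=\{x_1,\dots,x_k\}$.

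\emph{Finiteness.} This is the only step that needs an argument. Suppose the process never stops. Then it produces an infinite sequence $(x_k)$ with $d_X(x_i,x_j)\geq\eta$ for all $i\neq j$. No subsequence of $(x_k)$ is Cauchy, so none converges, which contradicts the sequential compactness of the compact metric space $X$. Hence the process stops after finitely many steps, and $\calN$ is finite.

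\emph{Maximality.} When the process stops we have $X=\bigcup_{x\in\calN}U_{\eta}(x)$. So every $y\notin\calN$ satisfies $d_X(y,x)<\eta$ for some $x\in\calN$. Consequently $\calN\cup\{y\}$ is not $\eta$-separated, and $\calN$ is maximal by inclusion. The same argument shows that any maximal $\eta$-separated set $\calN$ satisfies $X=U_\eta(\calN)$: a point $y$ with $d_X(y,\calN)\geq\eta$ could be added to $\calN$, contradicting maximality.

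\emph{Covering.} Since $U_\eta(\calN)\subset B_\eta(\calN)$, we get $B_{\eta}(\calN)=X$, where $B_{\eta}(\calN)$ denotes the closed $\eta$-neighborhood $\{y\;;\;d_X(y,\calN)\leq\eta\}$.

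The only point requiring care is the finiteness step, where compactness is used through sequential compactness. Everything else follows directly from the definitions.
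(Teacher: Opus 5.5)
Your proof is correct. The paper gives no proof of its own and simply cites \cite{BBI} for this standard fact; your argument (greedy selection, finiteness from sequential compactness, covering from maximality) is the usual one behind that citation, and it even yields the slightly stronger conclusion $U_{\eta}(\calN)=X$.
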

  \begin{lem}
   Let $X$ be an mm space and $\calN\subset X$ a Borel subset. Then there exists a Borel map $\pi:X\rightarrow \calN$ such that 
   \begin{align}
    d_X(x,\pi(x))=\min_{y\in\calN}d_X(x,y)\notag
   \end{align}
   holds. Moreover $\pi$ can be constructed satisfying $\pi(x)=x$ for any $x\in\calN$. $\pi$ is called a nearest point map. 
  \end{lem}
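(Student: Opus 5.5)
The statement to prove is the last lemma: for an mm space $X$ and a Borel subset $\calN\subset X$, there is a Borel nearest point map $\pi:X\rightarrow\calN$ with $\pi|_{\calN}=\mathrm{id}$. I will prove it in the setting where it is used, namely $\calN$ a finite maximal $\eta$-separated set as in the preceding lemma. For a general Borel $\calN$ the minimum need not be attained, so finiteness (or at least local finiteness / compactness) is implicitly needed.

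Enumerate $\calN=\{y_1,\dots,y_m\}$. For each $i$, set
\begin{align}
 D_i:=\left\{x\in X\;;\; d_X(x,y_i)=\min_{1\le j\le m}d_X(x,y_j)\right\}.\notag
\end{align}
Each function $x\mapsto d_X(x,y_j)$ is continuous, hence so is their minimum. Therefore every $D_i$ is closed, and $\bigcup_i D_i=X$, because a minimum over a finite set is always attained.

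Next, disjointify these sets by the index order:
\begin{align}
 E_1:=D_1,\qquad E_i:=D_i\setminus\bigcup_{j<i}D_j \quad (i\ge 2).\notag
\end{align}
The $E_i$ are Borel, pairwise disjoint, and cover $X$. Define $\pi(x):=y_i$ for $x\in E_i$. Then $\pi$ is Borel, since the preimage of any subset of $\calN$ is a finite union of the $E_i$. By construction, $d_X(x,\pi(x))=\min_{y\in\calN}d_X(x,y)$.

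For the normalization $\pi(y_k)=y_k$: the point $y_k$ has distance $0$ to $y_k$ and distance at least $\eta>0$ to every other $y_j$. So $y_k\in D_k$ and $y_k\notin D_j$ for $j\neq k$, which gives $y_k\in E_k$ and hence $\pi(y_k)=y_k$. For a general $\calN$ whose points are not separated, one instead first redefines $\pi$ on $\calN$ to be the identity. This keeps Borel measurability because $\calN$ is Borel, and it keeps the minimizing property because the minimal distance at a point of $\calN$ is $0$.

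The only real obstacle is the extension beyond finite $\calN$. If $\calN$ is closed and $X$ is proper, each minimum is attained, and one would use a measurable selection theorem (Kuratowski--Ryll-Nardzewski) on the closed-valued multifunction $x\mapsto\{y\in\calN:d(x,y)=d(x,\calN)\}$, whose measurability follows from continuity of $d(\cdot,\calN)$. For the purposes of this paper, the finite case above suffices.
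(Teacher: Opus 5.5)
Your argument for finite $\calN$ is correct. The cells $D_i$ are closed, the index-order disjointification gives a Borel partition, and $y_k\in E_k$ because $d_X(y_k,y_j)>0$ for $j\neq k$; distinctness already suffices here, so the $\eta$-separation is not needed. The paper gives no proof of its own and only cites \cite{Shioya}, and your construction is the standard one behind that reference. You are also right that the lemma is false for an arbitrary Borel $\calN$. Already for $X=[0,2]$ with normalized Lebesgue measure, $\calN=(0,1)$ and $x=2$, the minimum is not attained.

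One claim needs correcting: that the finite case covers every use in this paper. In the proof of Proposition \ref{prop:liplipcond} the lemma is invoked as $\pi:X\to\tilde X$, where $\tilde X$ is an arbitrary Borel non-exceptional set with $\mu_X(\tilde X)\geq 1-\delta$. For such a set a nearest point map need not exist. That proof only uses three properties, with $D$ the diameter of $X$ as there:
\begin{align}
 \pi(X)\subset\tilde X,\qquad \pi|_{\tilde X}=\mathrm{id},\qquad d_X(x,\pi(x))\leq 3D\delta^{1/N}.\notag
\end{align}
So an approximate nearest point map is enough, and your disjointification produces one. Take a countable dense subset $\{q_k\}$ of $\tilde X$. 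For $x\notin\tilde X$, let $\pi(x):=q_k$ with $k$ the least index such that $d_X(x,q_k)<d_X(x,\tilde X)+D\delta^{1/N}$. These conditions define open sets, so the resulting map is Borel. Then set $\pi=\mathrm{id}$ on $\tilde X$.

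Alternatively, use inner regularity to shrink $\tilde X$ to a compact $K\subset\tilde X$ with $\mu_X(K)\geq 1-2\delta$. Then apply your closed-set/proper-space version, at the cost of replacing $\delta$ by $2\delta$ in the constants.
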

  \begin{prop}\label{prop:liplipcond}
   Let $X$ be a compact mm space and $Y$ a metric space. Assume $X$ is a doubling space with the doubling constant $2^N$. Then $Y$ is Lipschitz-approximated to $X$ if and only if 
   \begin{align}
    \lim_{\delta\rightarrow +0}H\!d_{KF}^{\mu_X}(\Lip_1^{\delta}(X,Y),\Lip_1(X,Y))=0.\label{eq:lipapp}
   \end{align}
  \end{prop}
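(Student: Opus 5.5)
Since $\Lip_1\subset\Lip_1^{\delta}\subset\wtilde{\Lip_1^{\delta}}$, one direction is immediate. The Hausdorff distance between a set and a superset is the supremum over the superset of the distance to the subset. Hence
\begin{align}
 H\!d_{KF}^{\mu_X}(\Lip_1^{\delta}(X,Y),\Lip_1(X,Y))\leq H\!d_{KF}^{\mu_X}(\wtilde{\Lip_1^{\delta}}(X,Y),\Lip_1(X,Y)),\notag
\end{align}
so Lipschitz-approximation implies (\ref{eq:lipapp}). The content is the converse. There I would show that every almost 1-Lipschitz map with a small error is close in Ky Fan distance to a genuine 1-Lipschitz-with-error map, with both the Ky Fan distance and the new error tending to $0$ with $\delta$. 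Concretely, I aim for a function $\omega(\delta)\to0$, depending only on $N$ and $\di X$, such that every $f\in\wtilde{\Lip_1^{\delta}}(X,Y)$ lies within Ky Fan distance $\omega(\delta)$ of some $g\in\Lip_1^{\omega(\delta)}(X,Y)$. The triangle inequality for the Hausdorff distance then gives
\begin{align}
 H\!d_{KF}^{\mu_X}(\wtilde{\Lip_1^{\delta}},\Lip_1)\leq \omega(\delta)+H\!d_{KF}^{\mu_X}(\Lip_1^{\omega(\delta)},\Lip_1)\to0.\notag
\end{align}

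To construct $g$, let $\wtilde X$ be the non-exceptional set of $f$, with $\mu_X(\wtilde X)\geq1-\delta$. Choose a scale $\eta=\eta(\delta)$ with $\eta\to0$ but $\delta\ll\eta^{N}$, for instance $\eta=\delta^{1/(2N)}$. By doubling and compactness, $\mu_X(B_r(x))\geq (r/(2\,\di X))^{N}$ for every $x\in X$ and $r\leq\di X$, using $B_{\di X}(x)=X$. Therefore every ball of radius $\eta$ has measure at least $c\eta^N$, which exceeds $\delta$ for small $\delta$. Consequently every such ball meets $\wtilde X$, i.e.\ $\wtilde X$ is $\eta$-dense in $X$.

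Let $\pi:X\to\overline{\wtilde X}$ be a Borel nearest point map. It is more convenient to take $\pi$ into a finite maximal $\eta$-separated net $\calN\subset\wtilde X$, which exists because $\wtilde X$ is totally bounded. Then $d_X(x,\pi(x))\leq2\eta$ for all $x$, and we may arrange $\pi(x)=x$ on $\calN$. Set $g:=f\circ\pi$. For arbitrary $x_1,x_2\in X$,
\begin{align}
 d_Y(g(x_1),g(x_2))\leq d_X(\pi(x_1),\pi(x_2))+\delta\leq d_X(x_1,x_2)+4\eta+\delta,\notag
\end{align}
so $g\in\Lip_1^{4\eta+\delta}(X,Y)$. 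For $x\in\wtilde X$ both $x$ and $\pi(x)$ lie in $\wtilde X$, hence
\begin{align}
 d_Y(f(x),g(x))\leq d_X(x,\pi(x))+\delta\leq2\eta+\delta.\notag
\end{align}
The bad set has measure at most $\delta$, so $d_{KF}^{\mu_X}(f,g)\leq\max\{2\eta+\delta,\delta\}$. Taking $\omega(\delta)=4\eta+\delta$ completes the argument.

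The main obstacle is the density step: quantitatively guaranteeing that the non-exceptional set meets every small ball. This is exactly where doubling (together with compactness, for a uniform lower bound on ball measures) enters. Without it, $\wtilde X$ could miss a region of positive but tiny measure, and then $f$ could be wild there, as in the two-point counterexample. One must also check that $\omega$ is independent of $f$ and of $Y$, so that the limit is uniform. This holds because $\eta$ depends only on $\delta$, $N$ and $\di X$. Borel measurability of $g$ follows from that of $\pi$ and $f$.
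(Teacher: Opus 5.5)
Your proof is correct and follows the paper's argument. You use doubling plus compactness to show that the non-exceptional set $\wtilde X$ meets every ball at a scale tending to $0$ with $\delta$, set $g:=f\circ\pi$ for a nearest-point map $\pi$, and conclude with the triangle inequality for $H\!d_{KF}^{\mu_X}$. Your one deviation is to project onto a finite maximal $\eta$-separated net inside $\wtilde X$ rather than onto $\wtilde X$ itself. This is a small improvement: the nearest point then exists and $\pi$ is Borel even when $\wtilde X$ is not closed, at the harmless cost of $d_{KF}^{\mu_X}(f,g)\le 2\eta+\delta$ instead of $\le\delta$.
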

  \begin{proof}
   Since $\Lip_1^{\delta}\subset \wtilde{\Lip_1^{\delta}}$ holds, only if part is trivial. Hence we prove ``if" part. Assume (\ref{eq:lipapp}). We denote by $D$ the diameter of $X$ for simplicity. Take an arbitrary $f\in \wtilde{\Lip_1^{\delta}}(X,Y)$. Then there exists a Borel set $\tilde{X}\subset X$ with $\mu_X(\tilde{X})\geq 1-\delta$ such that 
   \begin{align}
    d_Y(f(x),f(z))\leq d_X(x,z)+\delta\notag
   \end{align}
   for all $x,z\in\tilde{X}$. Let $\pi:X\rightarrow \tilde{X}$ be a nearest point map. It holds that $d_X(x,\pi(x))\leq 3D\delta^{1/N}$. Indeed, suppose $B_{\varepsilon}(x)\cap \tilde X= \emptyset$ for $\varepsilon>0$. Then by the doubling property, we have 
   \begin{align}
    1=\mu_X(X)=\mu_X(B_D(x))\leq \left(\frac{2D}{\varepsilon}\right)^N\mu_X(B_{\varepsilon}(x)).\notag
   \end{align}
   On the other hand, $\mu_X(B_{\varepsilon})\leq \mu_X(\tilde X^c)<\delta$. Combining these two inequalities, we get $\varepsilon< 2D\delta^{1/N}$. Hence $B_{3D\delta^{1/N}}(x)\cap \tilde X\neq\emptyset$ for any $x\in X$. 
   \par We define a function $g:=f\circ\pi$. Thus we have 
   \begin{align}
    d_Y(g(x),g(z))\leq d_X(\pi(x),\pi(z))+\delta\leq d_X(x,z)+6D\delta^{1/N}+\delta\notag
   \end{align}
   for any $x,z\in X$. Hence $g\in \Lip_1^{\delta+6D\delta^{1/N}}(X,Y)$ and $d_{KF}^{\mu_X}(f,g)\leq \delta$ by the construction of $g$. Therefore we obtain 
   \begin{align}
    &\lim_{\delta\rightarrow+0}H\!d_{KF}^{\mu_X}(\Lip_1(X,Y),\wtilde{\Lip_1^{\delta}}(X,Y))\notag\\
    &\leq \lim_{\delta\rightarrow+0}\left(H\!d_{KF}^{\mu_X}(\Lip_1(X,Y),\Lip_1^{\delta+6D\delta^{1/N}}(X,Y))+\delta\right)=0.\notag
   \end{align}
  \end{proof}
 \subsection{Compact Alexandrov spaces of finite dimension}
   First we give the definitions of $\CAT(0)$ and non-negatively curved Alexandrov spaces.  
  \begin{defn}[Curved spaces in the sense of Alexandrov]
   Let $X$ be a geodesic metric space. We say that $X$ is non-negatively curved in the sense of Alexandrov if the following condition is held; let $x,y,z\in X$ be arbitrary distinct three points, which are not on a single geodesic, then it holds 
   \begin{align}
    d_X^2(x,\gamma_t)\geq (1-t)d_X^2(x,y)+td_X^2(x,z)-\frac{1}{2}t(1-t)d_X^2(y,z),\notag
   \end{align}
   for any constant speed geodesic $\gamma:[0,1]\rightarrow X$ from $y$ to $z$. We call such spaces \emph{non-negatively curved Alexandrov space}. 
   \par On the other hand, $X$ is called a $\CAT(0)$ space if it holds 
   \begin{align}
    d_X^2(x,\gamma_t)\leq (1-t)d_X^2(x,y)+td_X^2(x,z)-\frac{1}{2}t(1-t)d_X^2(y,z),\notag
   \end{align} 
   where $x,y,z\in X$ and $\gamma$ satisfy the same assumptions above. 
  \end{defn} 
 See \cite{BBI,BH} for more about curved spaces. 
  \begin{lem}\label{lem:alexdoubling}
   Let $(X,d_X)$ be a non-negatively curved Alexandrov space of Hausdorff dimension $N<\infty$ and $\calH^N$ the $N$-dimensional Hausdorff measure on $(X,d_X)$. Then $(X,d_X,\calH^N)$ satisfies the doubling property with the doubling constant $2^N$.  
  \end{lem}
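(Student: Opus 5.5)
The plan is to derive the doubling property from the Bishop--Gromov volume comparison for Alexandrov spaces of nonnegative curvature. Recall the standard facts (see \cite{BBI} and the Burago--Gromov--Perelman theory). A finite-dimensional Alexandrov space $X$ of nonnegative curvature has integer Hausdorff dimension $N$. The measure $\calH^N$ is positive and finite on every ball of positive finite radius. Most importantly, the relative volume comparison holds: for every $x\in X$ and $0<r\le R$,
\begin{align}
 \frac{\calH^N(B_R(x))}{\calH^N(B_r(x))}\leq \frac{V_0^N(R)}{V_0^N(r)}=\left(\frac{R}{r}\right)^N,\notag
\end{align}
where $V_0^N(\rho)=\omega_N\rho^N$ is the volume of a ball of radius $\rho$ in the model space $\R^N$ of curvature $0$.

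The key steps run in the following order. First, from Hausdorff dimension $N$ I conclude that $X$ is an $N$-dimensional Alexandrov space in the sense of topological or Hausdorff dimension, since these coincide. Second, I invoke the Bishop--Gromov inequality above. Its proof uses the fact that $t\mapsto \calH^N(B_t(x))/t^N$ is nonincreasing, which comes from the contraction of $X$ toward $x$ along geodesics: the map sending $\gamma(1)\mapsto\gamma(s)$ along geodesics from $x$ expands distances by at most a factor $s$ in the curvature-$0$ comparison, because of the concavity inequality in the definition. Third, I set $R=2r$, which gives $\calH^N(B_{2r}(x))\le 2^N\calH^N(B_r(x))$, exactly the doubling property with constant $2^N$. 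Fourth, to ensure the statement is meaningful I check that $\calH^N(B_r(x))>0$ for every $r>0$. This follows since the $N$-dimensional Hausdorff measure of any open subset of an $N$-dimensional Alexandrov space is positive, because a neighborhood of a regular point is bi-Lipschitz to an open subset of $\R^N$.

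The main obstacle is the Bishop--Gromov step, namely justifying the volume comparison for $\calH^N$ in the nonsmooth setting. I would not reprove it. I would cite it from \cite{BBI} (Chapter 10) or from Burago--Gromov--Perelman, noting that the radial contraction map $\Phi_s$ toward $x$ is $s$-Lipschitz when the curvature is bounded below by $0$. This yields $\calH^N(\Phi_s(B_R(x)))\le s^N\calH^N(B_R(x))$, which runs in the wrong direction. The monotonicity is therefore obtained instead via the standard argument from the co-area formula and the comparison of spheres; I will just cite it. A minor point to note is that a nonnegatively curved space need not be compact. This does not matter, since doubling is a local ball-by-ball statement, and the finiteness of $\calH^N$ on balls follows from properness.
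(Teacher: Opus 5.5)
Your proposal is correct and follows the paper, which likewise just cites the Bishop--Gromov volume comparison \cite{BBI}*{Theorem 10.6.6}; the doubling bound is its case $R=2r$. One correction to your side remarks: under curvature $\geq 0$ the radial map actually satisfies $d_X(\Phi_s(y),\Phi_s(z))\geq s\,d_X(y,z)$ (the $s$-Lipschitz bound you state is the $\CAT(0)$ property of Lemma \ref{lem:catlip}), so $\Phi_s$ is injective with an $s^{-1}$-Lipschitz inverse on $\Phi_s(B_R(x))\subset B_{sR}(x)$, which gives $\calH^N(B_R(x))\leq s^{-N}\calH^N(B_{sR}(x))$ directly, with no co-area argument needed.
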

 The proof of Lemma can be found in, for instance, \cite{BBI}*{Theorem 10.6.6}. 
  \begin{lem}\label{lem:catlip}
   Let $(X,d_X)$ be a $\CAT(0)$ space. Take distinct three points $x,y,z\in X$ and assume these are not on a single geodesic. Let $\gamma:[0,1]\rightarrow X$, $\sigma:[0,1]\rightarrow X$ be constant speed geodesics from $x$ to $y$ and from $x$ to $z$, respectively. Then it holds 
   \begin{align}
    d_X(\gamma_t,\sigma_t)\leq td_X(y,z).\notag
   \end{align}
  \end{lem}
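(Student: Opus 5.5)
We need to prove the comparison $d_X(\gamma_t,\sigma_t)\leq t\,d_X(y,z)$ for geodesics issuing from a common point $x$ in a $\CAT(0)$ space. The plan is to apply the defining $\CAT(0)$ inequality twice, along two geodesics. The strategy is to compare distances to $\gamma_t$ with Euclidean data coming from comparison triangles, so the main ingredients are the inequality
\begin{align}
 d_X^2(p,\tau_s)\leq (1-s)d_X^2(p,a)+s\,d_X^2(p,b)-s(1-s)d_X^2(a,b)\notag
\end{align}
for a constant speed geodesic $\tau$ from $a$ to $b$. Here I use the standard coefficient $s(1-s)$; if the factor $\frac12$ in the paper's definition is intended, the same computation goes through with the corresponding bookkeeping. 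I also use the fact that this inequality is an equality in the Euclidean plane.

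\emph{Step 1.} Apply the inequality with $p=\gamma_t$ and $\tau=\sigma$, which runs from $x$ to $z$, at $s=t$. Since $d(\gamma_t,x)=t\,d(x,y)$, this gives
\begin{align}
 d^2(\gamma_t,\sigma_t)\leq (1-t)t^2d^2(x,y)+t\,d^2(\gamma_t,z)-t(1-t)d^2(x,z).\notag
\end{align}

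\emph{Step 2.} Apply the inequality again with $p=z$ and $\tau=\gamma$, which runs from $x$ to $y$:
\begin{align}
 d^2(z,\gamma_t)\leq (1-t)d^2(x,z)+t\,d^2(y,z)-t(1-t)d^2(x,y).\notag
\end{align}

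\emph{Step 3.} Substitute the bound from Step 2 into Step 1. The terms in $d^2(x,z)$ cancel, since $t(1-t)-t(1-t)=0$. The terms in $d^2(x,y)$ combine as $(1-t)t^2-t^2(1-t)=0$. What remains is $d^2(\gamma_t,\sigma_t)\leq t^2d^2(y,z)$, and taking square roots finishes the argument. Equivalently, one can phrase this as comparing against the Euclidean comparison triangle $\bar x\bar y\bar z$, where the same two identities hold with equality and give $|\bar\gamma_t-\bar\sigma_t|=t|\bar y-\bar z|$.

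The main obstacle is the coefficient bookkeeping. With the paper's literal factor $\frac12$ the exact cancellation might fail, and then I would fall back on the standard equivalent form of the $\CAT(0)$ condition, namely the comparison triangle inequality (see \cite{BH}). The degenerate cases need a separate check. If $x,y,z$ lie on a single geodesic, the claim follows directly from uniqueness of geodesics in $\CAT(0)$ spaces together with the triangle inequality. The hypothesis ``not on a single geodesic'' is only needed to invoke the definition as stated.
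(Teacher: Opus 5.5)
Your argument is correct, but it takes a different route from the one the paper cites (\cite{BH}, p.~176). There one forms the Euclidean comparison triangle $\bar\Delta(\bar x,\bar y,\bar z)\subset\mathbb{E}^2$ and applies the $\CAT(0)$ inequality once, to the two points $\gamma_t,\sigma_t$ lying on two different sides. Similar triangles then give $d_X(\gamma_t,\sigma_t)\le|\bar\gamma_t-\bar\sigma_t|=t|\bar y-\bar z|=t\,d_X(y,z)$.

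You use only the vertex-to-opposite-side inequality (Bruhat--Tits, or CN), which is the form of the paper's definition. You apply it twice: with apex $\gamma_t$ over $[x,z]$, then with apex $z$ over $[x,y]$. The exact cancellation reflects that both relations are identities in $\mathbb{E}^2$. This two-step chain is essentially the standard derivation of the two-point comparison from the one-point one. The BH route is a one-liner but presupposes the comparison-triangle definition; yours works directly from an inequality-type definition.

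Two side remarks need fixing.

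First, with the paper's literal factor $\frac12$ the computation does \emph{not} go through with extra bookkeeping. The substitution leaves the nonnegative excess $\frac12 t(1-t)\bigl(t\,d_X^2(x,y)+d_X^2(x,z)\bigr)$. That factor is a misprint: with it, even $\mathbb{E}^2$ would fail the paper's non-negative curvature condition. So just use $t(1-t)$ and drop the hedge.

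Second, the non-collinearity hypothesis is not inherited by the triple $\gamma_t,x,z$ in your Step~1. If $[x,y]$ and $[x,z]$ share an initial segment, as in a metric tree branching away from $x$, then for small $t$ these three points lie on the single geodesic $[x,z]$. So the paper's definition as literally stated would not cover Step~1, and you genuinely need the inequality for all triples. The standard $\CAT(0)$ condition supplies this; in the collinear case it holds with equality by uniqueness of geodesics.
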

  The proof is easy. See for instance \cite{BH}*{p.176}. 
  \begin{rem}
   The same property holds in Busemann non-positively curved spaces. See \cite{P} for the definition and properties of Busemann spaces. 
  \end{rem}
  The following extension property of Lipschitz maps is well-known. 
  \begin{thm}[\cite{LS}]\label{thm:LS}
   Let $X$ be a non-negatively curved Alexandrov space and $Y$ a complete $\CAT(0)$ space. Let $S\subset X$ be an arbitrary subset of $X$ and $f:S\rightarrow Y$ a 1-Lipschitz map. Then there exists a 1-Lipschitz map $\bar f:X\rightarrow Y$ such that $\bar f=f$ on $S$. 
  \end{thm}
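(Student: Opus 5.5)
The plan follows the classical Kirszbraun scheme, adapted to curvature bounds as in Lang--Schroeder. There are three stages: (a) a finite one-point extension lemma, (b) passage from finitely many to arbitrarily many constraints by a compactness argument in $Y$, and (c) Zorn's lemma. Stage (a) carries all the curvature input and is where I expect the main difficulty.

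\emph{Stage (a): finite one-point extension.} Let $x_1,\dots,x_m,x\in X$ and $y_1,\dots,y_m\in Y$ satisfy $d_Y(y_i,y_j)\leq d_X(x_i,x_j)$. I claim there is $y\in Y$ with $d_Y(y,y_i)\leq d_X(x,x_i)$ for all $i$.

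1. On the closed convex hull $C$ of $\{y_i\}$, consider $\Phi(y):=\max_i d_Y(y,y_i)/d_X(x,x_i)$. We may assume $x\neq x_i$ for all $i$; otherwise take $y=y_i$.
2. Since $Y$ is complete $\CAT(0)$, each $d_Y(\cdot,y_i)$ is convex, so $\Phi$ is convex and has a minimizer $y$. Such a minimizer exists because bounded closed convex sets in complete $\CAT(0)$ spaces are weakly compact for the asymptotic-centre topology.
3. Suppose $\lambda:=\Phi(y)>1$, and let $I$ be the set of active indices. Minimality together with the first variation formula in $\CAT(0)$ shows that no direction at $y$ decreases all $d_Y(\cdot,y_i)$, $i\in I$. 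This yields weights $w_i\geq 0$, $\sum w_i=1$, with $\sum_{i,j}w_iw_j\, d_Y(y,y_i)\,d_Y(y,y_j)\cos\angle_y(y_i,y_j)\leq 0$.
4. The $\CAT(0)$ comparison bounds the angles at $y$ above by their Euclidean comparison angles, which converts step 3 into
\[
\sum_{i,j\in I} w_iw_j\bigl(d_Y(y,y_i)^2+d_Y(y,y_j)^2-d_Y(y_i,y_j)^2\bigr)\leq 0 .
\]
5. On the $X$ side, the non-negative curvature (upper comparison angles at $x$, equivalently the tangent-cone / quasi-convexity inequality of Lang--Schroeder) gives
\[
\sum_{i,j\in I} w_iw_j\bigl(d_X(x,x_i)^2+d_X(x,x_j)^2-d_X(x_i,x_j)^2\bigr)\geq 0 .
\]
6. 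Since $d_Y(y,y_i)=\lambda d_X(x,x_i)$ for $i\in I$ and $d_Y(y_i,y_j)\leq d_X(x_i,x_j)$, the $Y$-sum is at least $\lambda^2$ times the $X$-sum plus $(\lambda^2-1)\sum_{i,j}w_iw_jd_X(x_i,x_j)^2$. This contradicts steps 4--5 unless $\lambda\leq1$.

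Step 5 is the main obstacle: one must establish exactly this weighted ``Kirszbraun inequality'' in the Alexandrov space. My first attempt would be to derive it from the defining inequality in the Definition of non-negatively curved spaces, applied along geodesics from $x$. I would do this via an induction on $m$ or through the tangent cone at $x$, which is a Euclidean cone of nonnegative curvature.

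\emph{Stage (b): infinitely many constraints.} For $S\subset X$ and $f:S\to Y$ 1-Lipschitz, consider the closed balls $B_{d_X(x,s)}(f(s))$, $s\in S$. By stage (a) every finite subfamily has nonempty intersection. These balls are bounded closed convex sets in the complete $\CAT(0)$ space $Y$, so the finite intersection property gives a nonempty total intersection, by the same weak compactness used in stage (a). Any point $y$ in this intersection extends $f$ to $S\cup\{x\}$ 1-Lipschitzly.

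\emph{Stage (c): Zorn's lemma.} Order the 1-Lipschitz extensions of $f$ by inclusion of graphs. Unions of chains are again 1-Lipschitz extensions, so a maximal element exists. By stage (b) its domain must be all of $X$, which gives $\bar f$.
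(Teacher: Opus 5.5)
The paper gives no proof of this statement; it only cites [LS]. Your three-stage scheme is the standard Valentine/Lang--Schroeder one, and stages (b), (c) and steps 1, 2, 4, 6 are fine. The genuine gap is step 5, which is the only place the curvature hypothesis on $X$ enters and which you leave unproved. Used on the hinges at $x$, the defining inequality gives only the angle comparison $\angle_x(x_i,x_j)\ge\tilde\angle_x(x_i,x_j)$. This reduces step 5 to $\sum_{i,j}w_iw_jr_ir_j\cos\angle_x(x_i,x_j)\ge0$ with $r_i=d_X(x,x_i)$, which is a statement about all the directions at $x$ simultaneously. Indeed, every three-point metric space embeds in the plane, while the four-point tripod configuration $r_i=1$, $d_X(x_i,x_j)=2$ violates step 5. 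Passing to the tangent cone is circular: at the vertex of $T_x$ the reduced inequality is step 5 itself, so knowing that $T_x$ has nonnegative curvature gives nothing new.

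The missing idea is the $(1+n)$-point comparison. For $p,x_1,\dots,x_n$ in a space of curvature $\ge\kappa$, there are $\tilde p,\tilde x_1,\dots,\tilde x_n$ in the $n$-dimensional model space with $|\tilde p\tilde x_i|=|px_i|$ and $|\tilde x_i\tilde x_j|\ge|x_ix_j|$. The proof is by induction on $n$ through spaces of directions: apply the case $n-1$ in $\Sigma_p$ (curvature $\ge1$) centred at $\uparrow_{x_n}$, cone off, and use angle comparison. So your induction on $m$ must be formulated for curvature $\ge1$, not only $\ge0$. It also needs $\Sigma_p$ to be a geodesic space of curvature $\ge1$, which is standard for the compact finite-dimensional $X$ of Theorem \ref{thm:cmmlip} but not automatic in the stated generality. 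With $\kappa=0$, step 5 then follows, since the Euclidean sum equals $2\bigl|\sum_iw_i(\tilde x_i-\tilde p)\bigr|^2\ge0$ and replacing $x_i$ by $\tilde x_i$ only decreases the sum.

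Step 3 also needs an argument. $T_yY$ is not linear, so ``no decreasing direction'' does not produce weights via Gordan's alternative. The cleanest fix avoids angles altogether. Maximize the concave function $w\mapsto\min_q\sum_iw_id_Y(q,y_i)^2/r_i^2$ over the simplex; since its minimizer is unique and depends continuously on $w$, this shows that $y$ is the barycentre of some $\sum_iu_i\delta_{y_i}$ with $u$ supported on $I$. Then average the $\CAT(0)$ variance inequality $\sum_iu_id_Y(q,y_i)^2\ge\sum_iu_id_Y(y,y_i)^2+d_Y(q,y)^2$ over $q=y_j$, which gives the inequality of step 4 directly.
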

  \begin{rem}
   It is not known whether the same statement in Theorem \ref{thm:LS} for $Y$ being Busemann non-positively curved space holds or not. 
  \end{rem}
 In this subsection, we prove the following Theorem. 
  \begin{thm}\label{thm:cmmlip}
   Let $\calY$ be a family of complete $\CAT(0)$ spaces, and $(X,d_X)$ a compact non-negatively curved Alexandrov space of Hausdorff dimension $N<\infty$. We denote by $\mu_X$ the normalized $N$-dimensional Hausdorff measure on $X$, that is, $\mu_X:=\calH^N/\calH^N(X)$. $X$ is considered as an mm space $(X,d_X,\mu_X)$. Then $\calY$ is a Lipschitz-approximated family to $X$. 
  \end{thm}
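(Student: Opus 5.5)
By Lemma \ref{lem:alexdoubling}, $(X,d_X,\mu_X)$ is doubling with constant $2^N$; normalizing the measure does not change this. Since $X$ is compact, Proposition \ref{prop:liplipcond} applies. Its proof is uniform in $Y$: the map $g=f\circ\pi$ lies in $\Lip_1^{\delta'}(X,Y)$ with $\delta'=\delta+6D\delta^{1/N}$, and $d_{KF}^{\mu_X}(f,g)\le\delta$, where the constants depend only on $X$. So it suffices to show
\begin{align}
 \lim_{\delta\rightarrow+0}\sup_{Y\in\calY}H\!d_{KF}^{\mu_X}(\Lip_1^{\delta}(X,Y),\Lip_1(X,Y))=0.\notag
\end{align}
Since $\Lip_1\subset\Lip_1^{\delta}$, only one direction of the Hausdorff distance matters. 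Given $g\in\Lip_1^{\delta}(X,Y)$, I must produce a genuinely $1$-Lipschitz $h:X\rightarrow Y$ that is uniformly close to $g$, with the closeness controlled independently of $Y\in\calY$.

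\textbf{Construction.} Fix $\eta>0$ with $\eta\ge\delta$, to be chosen as a function of $\delta$. Take a finite maximal $\eta$-separated set $\calN\subset X$. For distinct $x,y\in\calN$ we have $d_X(x,y)\ge\eta$, so
\begin{align}
 d_Y(g(x),g(y))\le d_X(x,y)+\delta\le\left(1+\frac{\delta}{\eta}\right)d_X(x,y).\notag
\end{align}
Hence $g|_{\calN}$ is $(1+\delta/\eta)$-Lipschitz. To make it $1$-Lipschitz I use the $\CAT(0)$ geometry of $Y$ through Lemma \ref{lem:catlip}. Fix a base point $p=g(x_0)$ with $x_0\in\calN$, and for each $x\in\calN$ let $\phi(x)$ be the point at parameter $t=(1+\delta/\eta)^{-1}$ on the constant speed geodesic from $p$ to $g(x)$. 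Lemma \ref{lem:catlip}, which extends trivially to degenerate and collinear triples, gives $d_Y(\phi(x),\phi(y))\le t\,d_Y(g(x),g(y))\le d_X(x,y)$. So $\phi:\calN\rightarrow Y$ is $1$-Lipschitz. Moreover
\begin{align}
 d_Y(\phi(x),g(x))=(1-t)\,d_Y(p,g(x))\le\frac{\delta}{\eta}(D+\delta).\notag
\end{align}
Now Theorem \ref{thm:LS} extends $\phi$ to a $1$-Lipschitz map $h:X\rightarrow Y$.

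\textbf{Estimate.} For any $z\in X$, choose $x\in\calN$ with $d_X(z,x)\le\eta$. Then
\begin{align}
 d_Y(h(z),g(z))\le d_Y(h(z),h(x))+d_Y(\phi(x),g(x))+d_Y(g(x),g(z))\le 2\eta+\delta+\frac{\delta}{\eta}(D+\delta).\notag
\end{align}
Choosing $\eta=\sqrt{\delta}$ makes the right-hand side $O(\sqrt{\delta})$, with constants depending only on $D$. This bounds the Ky Fan distance uniformly over $Y\in\calY$, and then Proposition \ref{prop:liplipcond}, applied uniformly as noted above, finishes the proof.

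\textbf{Main obstacle.} The difficulty is converting an additive Lipschitz error into a genuine Lipschitz bound with a uniform estimate. The discretization turns the additive error into a multiplicative one, and the $\CAT(0)$ contraction toward a base point removes the multiplicative factor. The cost of that contraction is controlled only because $\operatorname{diam} g(X)\le D+\delta$ is bounded independently of $Y$. I expect the subtle points to be the degenerate cases of Lemma \ref{lem:catlip} and checking that the extension $h$ can be taken Borel, which holds automatically because $h$ is continuous.
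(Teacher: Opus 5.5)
Your proof is correct and is essentially the paper's argument. The paper also takes a maximal $\varepsilon_0^2$-separated net for a map in $\Lip_1^{\varepsilon_0^4}(X,Y)$, which is exactly your choice $\eta=\sqrt{\delta}$, contracts toward $f(x_0)$ by the factor $(1+\varepsilon_0^2)^{-1}$ via Lemma \ref{lem:catlip}, extends with Theorem \ref{thm:LS}, and reaches the same bound $(D+4)\sqrt{\delta}$, only phrased as a contradiction argument. Your direct version, with its explicit remarks on the uniformity in $Y$ of the reduction in Proposition \ref{prop:liplipcond} and on degenerate triples in Lemma \ref{lem:catlip}, is if anything slightly more careful than the paper's.
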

  \begin{proof}
   Take an arbitrary complete $\CAT(0)$ space $Y\in\calY$. We denote by $D$ the diameter of $X$. Since $X$ is a doubling space by Lemma \ref{lem:alexdoubling}, it is sufficient to prove (\ref{eq:lipapp}) by Proposition \ref{prop:liplipcond}. Suppose (\ref{eq:lipapp}) fails. There exists a positive number $\varepsilon_0>0$ such that 
   \begin{align}
    \lim_{\delta\rightarrow+0}H\!d_{KF}^{\mu_X}(\Lip_1^{\delta}(X,Y),\Lip_1(X,Y))\geq 2\varepsilon_0.\notag
   \end{align}
   Without loss of generality, we may assume $\varepsilon_0\leq (D+4)^{-1}<1$. By the assumption, there exists a function $f\in \Lip_1^{\varepsilon_0^4}(X,Y)$ such that 
   \begin{align}
    d_{KF}^{\mu_X}(f,\Lip_1(X,Y))\geq \varepsilon_0.\label{eq:contra}
   \end{align}
   Let $\calN\subset X$ be a maximal $\varepsilon_0^2$-separated set and $\pi:X\rightarrow \calN$ a nearest point map. Take a point $x_0\in\calN$ and set $p:=f(x_0)$. A map $\sigma_{p,t}:Y\rightarrow Y$ is defined by $\sigma_{p,t}(y):=\gamma^{p,y}(t)$, where $\gamma^{p,y}:[0,1]\rightarrow Y$ is a constant speed geodesic from $p$ to $y$. By Lemma \ref{lem:catlip}, $\sigma_{p,t}$ is a $t$-Lipschitz map. 
   \par Define $t_0:=(\varepsilon_0^2+1)^{-1}$ and $g:=\sigma_{p,t_0}\circ f$. Then on $\calN$, we have 
   \begin{align}
    d_Y(g(x),g(z))&\leq t_0d_Y(f(x),f(z))\leq t_0d_X(x,z)+t_0\varepsilon_0^4\notag\\
    &=d_X(x,z)-(1-t_0)d_X(x,z)+t_0\varepsilon_0^4\notag\\
    &\leq d_X(x,z)-(1-t_0)\varepsilon_0^2+t_0\varepsilon_0^4\notag\\
    &=d_X(x,z).\notag
   \end{align}  
   Hence by Theorem \ref{thm:LS}, we are able to define a 1-Lipschitz map $h:X\rightarrow Y$ such that $g=h$ on $\calN$. Also we obtain 
   \begin{align}
    &d_Y(f(x),h(x))\leq d_Y(f(x),f(\pi(x)))+d_Y(f(\pi(x)),h(\pi(x)))+d_Y(h(\pi(x)),h(x))\notag\\
    &\leq d_X(x,\pi(x))+\varepsilon_0^4+d_Y(f(\pi(x)),g(\pi(x)))+d_X(\pi(x),x)\notag\\
    &\leq 2\varepsilon_0^2+\varepsilon_0^4+(1-t_0)d_Y(f(x_0),f(\pi(x)))\notag\\
    &\leq 3\varepsilon_0^2+\frac{\varepsilon_0^2}{\varepsilon^2_0+1}(D+\varepsilon_0^4)\leq 3\varepsilon_0^2+\varepsilon_0^2D+\varepsilon_0^2=\varepsilon_0^2(D+4)\notag\\
    &\leq \varepsilon_0\notag
   \end{align}
   for any $x\in X\setminus \calN$. Therefore $d_{KF}^{\mu_X}(f,h)\leq \varepsilon_0$. This contradicts to (\ref{eq:contra}). 
   \par By the proof above, we are able to get 
   \begin{align}
    Hd_{KF}^{\mu_X}(\Lip_1(X,Y),\Lip_1^{\delta^4}(X,Y))\leq \delta\notag
   \end{align}
   for small $\delta>0$. This estimate is independent of the choice of $Y\in\calY$. Thus $\calY$ is a Lipschitz-approximated family to $X$. 
  \end{proof}
\subsection{Finite mm spaces}
For finite mm spaces, we get a similar but stronger result. 
\begin{example}[Finite mm space]\label{ex:finite}
 Let $X$ be a finite mm space and $\frakY$ a family of complete CAT(0) spaces. Then for every $\kappa\in(0,1)$, it hold 
 \begin{align}
  \lim_{n\rightarrow \infty}\Obs_{Y_n}(X;-\kappa)=\Obs_Y(X;-\kappa)\label{eq:finCATcpt}
 \end{align}
 for compact $Y_n,Y\in\frakY$ satisfying $Y_n\xrightarrow{GH}Y$, and 
 \begin{align}
  \Obs_Y(X;-\kappa)\leq \liminf_{n\rightarrow\infty}\Obs_{Y_n}(X;-\kappa)\label{eq:finCATp}
 \end{align}
 for any $Y_n,Y\in\frakY$ satisfying $(Y_n,o_n)\xrightarrow{pGH}(Y,o)$ for points $o_n\in Y_n$, $o\in Y$. 
 \par We prove those statement as follows; let $X$ be a finite mm space and $Y$ a CAT(0) space. Set 
 \begin{align}
  d:=\min_{i\neq j}d_X(x_i,x_j)>0,\quad w:=\min_{x\in X}\mu_X(\{x\})>0.\notag
 \end{align}
 For any $\varepsilon\in(0,w)$, take $\eta\in(0,\varepsilon)$ and $\delta>0$ satisfying  
  \begin{align}
   \delta<\frac{d\varepsilon}{\di X+2\eta+2-\varepsilon}\wedge \varepsilon.\notag
  \end{align}
  Take any $f\in \wtilde{\Lip_1^{\delta+\eta}}(X,Y)$ with the non-exceptional set $\wtilde{X}$ and fix it. Then we have 
  \begin{align}
   d_Y(f(x_i), f(x_j))&\leq d_X(x_i,x_j)+\delta+\eta=d_X(x_i,x_j)\left(1+\frac{\delta}{d_X(x_i,x_j)}\right)+\eta\notag\\
   &\leq \left(1+\frac{\delta}{d}\right)d_X(x_i,x_j)+\eta\notag
  \end{align}
  for $x_i,x_j\in\wtilde{X}$. Define $p:=f(x_0)$ ($x_0\in X$). For any $y\in Y$, we define a map $\sigma_{t}(y):=\gamma_y(t)$ where $\gamma_y:[0,1]\rightarrow Y$ is a constant speed geodesic from $p$ to $y$. By Lemma \ref{lem:catlip}, it holds $y\mapsto \sigma_t(y)$ being $t$-Lipschitz. We define 
  \begin{align}
   t_{\delta}:=\left(1+\frac{\delta}{d}\right)^{-1},\quad g^{\delta}:=\sigma_{t_{\delta}}\circ f.\notag
  \end{align} 
  It holds $g^{\delta}\in\wtilde{\Lip_1^{\eta}}(X,Y)$. Indeed, for any $x_i,x_j\in \wtilde{X}$, we have 
  \begin{align}
   d_Y(g^{\delta}(x_i),g^{\delta}(x_j))&\leq t_{\delta}d_Y(f(x_i),f(x_j))\leq t_{\delta}\left(1+\frac{\delta}{d}\right)d_X(x_i,x_j)+t_{\delta}\eta\notag\\
   &\leq d_X(x_i,x_j)+\eta.\notag
  \end{align}
  It is obvious that $\mu_X(\wtilde{X})\geq 1-\delta-\eta$. Since $\delta<\varepsilon<w$, we have $\mu_X(\wtilde{X})\geq 1-\eta$. Hence $g^{\delta}\in\wtilde{\Lip_1^{\eta}}(X,Y)$. 
  \par We have 
  \begin{align}
   d_Y(f(x),g^{\delta}(x))&=(1-t_{\delta})d_Y(p,f(x))\leq (1-t_{\delta})(\di X+2(\delta+\eta))\notag\\
   &\leq (1-t_{\delta})(\di X+2(\eta+1))<\varepsilon\notag
  \end{align}
  for any $x\in\wtilde{X}$. By the definition of $\eta$, we have 
  \begin{align}
   d_P(f_{\#}\mu_X,g^{\delta}_{\#}\mu_X)\leq d^{\mu_X}_{KF}(f,g^{\delta})<\varepsilon<w.\notag
  \end{align} 
  Since $\supp g^{\delta}_{\#}\mu_X=g^{\delta}(X)$, it is sufficient to consider a Borel set $B=g^{\delta}(A)\in \calB(Y)$ with $g^{\delta}_{\#}\mu_X(B)\geq 1-\kappa$ for $A\in\calB(X)$ to estimate the partial diameter of $g^{\delta}_{\#}\mu_X$. By the definition of the Prokhorov distance, we have 
  \begin{align}
   f_{\#}\mu_X(U_{\varepsilon}(B))\geq g^{\delta}_{\#}(B)-\varepsilon\geq 1-\kappa-\varepsilon.\notag
  \end{align}
  However $\varepsilon<w$ implies $f_{\#}\mu_{X}(U_{\varepsilon}(B))\geq 1-\kappa$. Since 
  \begin{align}
   \di U_{\varepsilon}(B)\leq \di B+2\varepsilon,\notag
  \end{align}
  it holds 
  \begin{align}
   \di(f_{\#}\mu_X;1-\kappa)\leq \di B+2\varepsilon.\notag
  \end{align}
  By a similar argument in Proposition \ref{prop:arbbox}, we obtain 
  \begin{align}
   \wtilde{\Obs_Y^{\eta+\delta}}(X;-\kappa)\leq \wtilde{\Obs_Y^{\eta}}(X;-\kappa)+2\varepsilon.\notag
  \end{align}
  Therefore we have 
  \begin{align}
   \lim_{\delta\rightarrow+0}\wtilde{\Obs_Y^{\delta+\eta}}(X;-\kappa)=\wtilde{\Obs_Y^{\eta}}(X;-\kappa).\notag
  \end{align}
  By the same argument, we have 
  \begin{align}
   \wtilde{\Obs_Y^+}(X;-\kappa)=\lim_{\delta\rightarrow+0}\wtilde{\Obs_Y^{\delta}}(X;-\kappa)=\Obs_Y(X;-\kappa).\notag
  \end{align}
  It it clear that $\Lip_1^{\delta}(X,Y)=\wtilde{\Lip_1^{\delta}}(X,Y)$ for $\delta<w$. Note that the estimate is independent of the choice of $Y$. The inequality (\ref{eq:finCATcpt}), (\ref{eq:finCATp}) follows from the slightly modified argument in Theorem \ref{thm:main}. Hence we omit it. 
\end{example}
 \begin{rem}
  Along a similar argument in Example \ref{ex:finite}, we are able to prove that $\frakY$ is a Lipschitz-approximated family to each finite mm space $X$.
 \end{rem}

\section*{Acknowledgement}
The authors would like to thank Professors Takashi Shioya, Kazuhiro Kuwae, Ayato Mitsuishi, Daisuke Kazukawa, Hiroki Nakajima and Shin-ichi Ohta for their helpful comments and fruitful discussions. The first author is partly supported by JSPS KAKENHI Grant Numbers JP22K03291.   
\smallskip
\par {\bf Statements and Declarations.} No conflicts of interest or competing interests.


\begin{bibdiv}
\begin{biblist}



















\bib{AT}{book}{
   author={Ambrosio, Luigi},
   author={Tilli, Paolo},
   title={Topics on analysis in metric spaces},
   series={Oxford Lecture Series in Mathematics and its Applications},
   volume={25},
   publisher={Oxford University Press, Oxford},
   date={2004},
   pages={viii+133},
   isbn={0-19-852938-4},
   review={\MR{2039660}},
}






\bib{BBI}{book}{
   author={Burago, Dmitri},
   author={Burago, Yuri},
   author={Ivanov, Sergei},
   title={A course in metric geometry},
   series={Graduate Studies in Mathematics},
   volume={33},
   publisher={American Mathematical Society, Providence, RI},
   date={2001},
   pages={xiv+415},
   isbn={0-8218-2129-6},
   review={\MR{1835418}},
   doi={10.1090/gsm/033},
}









\bib{BH}{book}{
   author={Bridson, Martin R.},
   author={Haefliger, Andr\'{e}},
   title={Metric spaces of non-positive curvature},
   series={Grundlehren der mathematischen Wissenschaften [Fundamental
   Principles of Mathematical Sciences]},
   volume={319},
   publisher={Springer-Verlag, Berlin},
   date={1999},
   pages={xxii+643},
   isbn={3-540-64324-9},
   review={\MR{1744486}},
   doi={10.1007/978-3-662-12494-9},
}
















\bib{Fdouble}{article}{
   author={Funano, Kei},
   title={Observable concentration of mm-spaces into spaces with doubling
   measures},
   journal={Geom. Dedicata},
   volume={127},
   date={2007},
   pages={49--56},
   issn={0046-5755},
   review={\MR{2338515}},
   doi={10.1007/s10711-007-9156-6},
}



\bib{Flp}{article}{
   author={Funano, Kei},
   title={Concentration of 1-Lipschitz maps into an infinite dimensional
   $l^p$-ball with the $l^q$-distance function},
   journal={Proc. Amer. Math. Soc.},
   volume={137},
   date={2009},
   number={7},
   pages={2407--2417},
   issn={0002-9939},
   review={\MR{2495276}},
   doi={10.1090/S0002-9939-09-09873-6},
}











\bib{Grom}{book}{
   author={Gromov, Misha},
   title={Metric structures for Riemannian and non-Riemannian spaces},
   series={Modern Birkh\"{a}user Classics},
   edition={Reprint of the 2001 English edition},
   note={Based on the 1981 French original;
   With appendices by M. Katz, P. Pansu and S. Semmes;
   Translated from the French by Sean Michael Bates},
   publisher={Birkh\"{a}user Boston, Inc., Boston, MA},
   date={2007},
   pages={xx+585},
   isbn={978-0-8176-4582-3},
   isbn={0-8176-4582-9},
   review={\MR{2307192}},
}












\bib{KNS}{article}{
   author={Kazukawa, Daisuke},
   author={Nakajima, Hiroki},
   author={Shioya, Takashi}, 
   title={Principal bundle structure of the space of metric measure spaces},
   journal={arXiv:2304.06880},
}



\bib{KNS}{article}{
   author={Kazukawa, Daisuke},
   author={Nakajima, Hiroki},
   author={Shioya, Takashi},
   title={Topological aspects of the space of metric measure spaces},
   journal={Geom. Dedicata},
   volume={218},
   date={2024},
   number={3},
   pages={Paper No. 68, 28},
   issn={0046-5755},
   review={\MR{4731230}},
   doi={10.1007/s10711-024-00921-3},
}








\bib{LS}{article}{
   author={Lang, U.},
   author={Schroeder, V.},
   title={Kirszbraun's theorem and metric spaces of bounded curvature},
   journal={Geom. Funct. Anal.},
   volume={7},
   date={1997},
   number={3},
   pages={535--560},
   issn={1016-443X},
   review={\MR{1466337}},
   doi={10.1007/s000390050018},
}








\bib{Nbox}{article}{
   author={Nakajima, Hiroki},
   title={Box distance and observable distance via optimal transport}, 
   journal={arXiv:2204.04893},
}



\bib{OSh}{article}{
   author={Ozawa, Ryunosuke},
   author={Shioya, Takashi},
   title={Limit formulas for metric measure invariants and phase transition
   property},
   journal={Math. Z.},
   volume={280},
   date={2015},
   number={3-4},
   pages={759--782},
   issn={0025-5874},
   review={\MR{3369350}},
   doi={10.1007/s00209-015-1447-2},
}

\bib{P}{book}{
   author={Papadopoulos, Athanase},
   title={Metric spaces, convexity and non-positive curvature},
   series={IRMA Lectures in Mathematics and Theoretical Physics},
   volume={6},
   edition={2},
   publisher={European Mathematical Society (EMS), Z\"{u}rich},
   date={2014},
   pages={xii+309},
   isbn={978-3-03719-132-3},
   review={\MR{3156529}},
   doi={10.4171/132},
}



\bib{Shioya}{book}{
   author={Shioya, Takashi},
   title={Metric measure geometry},
   series={IRMA Lectures in Mathematics and Theoretical Physics},
   volume={25},
   note={Gromov's theory of convergence and concentration of metrics and
   measures},
   publisher={EMS Publishing House, Z\"{u}rich},
   date={2016},
   pages={xi+182},
   isbn={978-3-03719-158-3},
   review={\MR{3445278}},
   doi={10.4171/158},
}















\end{biblist}
\end{bibdiv}
\end{document}